\theoremstyle{plain}
\newtheorem{Assumption}{Assumption}
\newtheorem{Model}{Model}
\newcommand\norm[1]{\left\lVert#1\right\rVert}
\crefname{hypothesis}{Hypothesis}{Hypotheses}
\title{Improved Convergence Bounds For Operator Splitting Algorithms With Rare Extreme Computational Errors\thanks{{Work supported by UK's EPSRC (EP/T026111/1, EP/S000631/1), and the MOD University Defence     Research Collaboration.}}}
\author{Anis Hamadouche\thanks{Anis Hamadouche, Andrew M.\ Wallace, and Jo\~ao F.\ C.\ Mota     are with the School of Engineering \& Physical Sciences, Heriot-Watt University, Edinburgh EH14 4AS,     UK. (e-mail: \{ah225,a.m.wallace,j.mota\}@hw.ac.uk).}
\and Andrew M.\ Wallace\footnotemark[2]
\and Jo\~ao F.\ C.\ Mota\footnotemark[2]}
\newcommand*{\addFileDependency}[1]{
  \typeout{(#1)}
  \@addtofilelist{#1}
  \IfFileExists{#1}{}{\typeout{No file #1.}}
}
\newcommand*{\myexternaldocument}[1]{%
    \externaldocument{#1}%
    \addFileDependency{#1.tex}%
    \addFileDependency{#1.aux}%
}
\begin{document}

\maketitle

\begin{abstract}
    \textbf{In this paper, we improve upon our previous work~\cite{hamadouche2022sharper, hamadouche2021sspd} and establish convergence bounds on the objective function values of approximate proximal-gradient descent (AxPGD), approximate accelerated proximal-gradient descent (AxAPGD) and approximate proximal ADMM (AxWLM-ADMM) schemes. We consider approximation errors that manifest rare extreme events and we propagate their effects through iterations. We establish probabilistic asymptotic and non-asymptotic convergence\\ bounds  as functions of the range (upper/lower bounds) and variance of approximation errors. We use the derived bound to assess AxPGD in a sparse model predictive control of a spacecraft system and compare its accuracy with previously derived bounds.}
\end{abstract}

\begin{keywords}
  Convex Optimization, Proximal Gradient Descent, Approximate Algorithms; ADMM; Operator Splitting Algorithms
\end{keywords}

\begin{AMS}
  49M37, 65K05, 90C25
\end{AMS}

\section{INTRODUCTION}
Given the following optimization problem
\begin{equation}
\label{gen.prob.1}
\underset{x \in X}{\text{minimize}}\quad f(x) = g(x) + h(x),
\end{equation}
 where $g:\mathbb{R}^n\rightarrow \mathbb{R}$ and $h:\mathbb{R}^n\rightarrow \mathbb{R} \cup \{+\infty\}$ are real-valued convex functions, and the set $X$ is assumed to be closed and convex. $h$ can be non-differentiable in general. Problem \eqref{gen.prob.1} is also known as \textit{structured optimization} problem in the literature.

As was originally proposed in \cite{moreau1965proximite}, problems such as \eqref{gen.prob.1} are easier to solve once approximated by a parametric function $f_\lambda$ through a non-euclidean projection (a.k.a. Moreau envelope) as follows
\begin{equation}
    \label{prox0.1}
     f_\lambda(x) = \underset{z \in \mathbb{R}^n} {\inf} \quad f(z)+ \frac{1}{2\lambda} \norm{z-x}_{2}^2 
\end{equation}
It turns out that $f_\lambda$ is a convex and differentiable function that has the same optimal set $X^\star$ as the original problem \eqref{gen.prob.1} in addition to the fact of being equal to the original function (in value) when both are evaluated at $X^\star$ \cite{moreau1965proximite}. Building on top of this result, \cite{martinet1978perturbation} proposed the \textit{incremental proximal minimization} algorithm that generates a sequence $\{x^k\}$ by solving the following subproblem at every iteration $k$ starting from $x^0$
\begin{equation}
    \label{prox0.2}
     x^{k+1} = \underset{x \in \mathbb{R}^n} {\arg\min} \quad f(x)+ \frac{1}{2\lambda_k} \norm{x-x^{k}}_{2}^2 
\end{equation}
with $\lambda_k > 0$ for all $k \geq 1$. The exact convergence of the latter, and thus its practical usefulness was not known until the seminal work of \cite{rockafellar1976monotone}. Accelerated version of \eqref{prox0.2} was proposed by \cite{guler1992new} in attempt to reproduce the work of \cite{nesterov1983method} in the proximal framework by substituting $x^{k}$ in \eqref{prox0.2} by the following extrapolation step
\begin{equation}
    \label{extrapolate}
    \begin{aligned}
        y^k &= (1-\alpha_k)x^k + \alpha_k \nu^k; \\
        \nu^{k+1} &= \nu^k + \frac{1}{\alpha_k}(x^{k+1}-y^k),
    \end{aligned}
\end{equation}
with some parameter sequence $\{\alpha_k\}$ (see \cite{guler1992new} for more details). Although the proximal term in \eqref{prox0.1} and \eqref{prox0.2} can be any nonquadratic regularization term (i.e, non-euclidean), research efforts thus far have only been focused on proximal methods with quadratic kernels, i.e, $\norm{x-x^k}^2$. In an effort to exploit the geometry of the constraints, alternative entropy-like Bregman kernels were proposed in \cite{chen1993convergence,teboulle1992entropic,eggermont1990multiplicative,eckstein1993nonlinear,censor1992proximal}, whose practical usefulness has been unknown for more than a decade until recently rediscovered in \cite{hanzely2018accelerated,teboulle2018simplified,bolte2018first}. In a more recent work \cite{nesterov2020inexact}, the author suggested to use powers of the Euclidean norm $\norm{.}^{p+1}$ as the regularizing function in \eqref{prox0.2}, which he called the \textit{$p^{th}$-order proximal-point operator} and proved that the latter achieves better convergence rates than classical choices in convex settings.

In problems such as \eqref{gen.prob.1}, because not all $g$ and $h$ are good candidates for the proximal step \eqref{prox0.2} (not proximable), combinations of other techniques with proximal methods for minimizing the sum of two functions such as \eqref{gen.prob.1} was treated in \cite{nesterov2007gradient} but only gained popularity after the famous works of \cite{beck2009fast,beck2009gradient}. The idea stems from replacing the iteration in the \textit{gradient descent} algorithm
\begin{align}
    x^{k+1} = x^k - \lambda_k\nabla f(x^k)
\end{align}
by a gradient step with respect to the differentiable term $g$ followed by an \textit{implicit gradient step} with respect to the non-differentiable term $h$ as follows
\begin{equation}
\begin{aligned}
    \label{grad.iter}
    z^{k+1} &= x^k - \lambda_k\nabla g(x^k)\\
    x^{k+1} &= z^{k+1} - \lambda_k\nabla h(x^{k+1})
\end{aligned}
\end{equation}
or equivalently
\begin{equation}
    \nabla h(x^{k+1}) + \frac{x^{k+1}-(x^k - \lambda_k\nabla g(x^k))}{\lambda_k} = 0,
\end{equation}
which is the necessary optimality condition for the following convex minimization problem
\begin{equation}
    \label{prox.prob.0}
    \underset{x \in \mathbb{R}^n} {\min} \quad h(x) + \frac{\norm{x-(x^k - \lambda_k\nabla g(x^k))}^2}{2\lambda_k}.
\end{equation}
The \textit{minimizer} of \eqref{prox.prob.0} is iteratively given by
\begin{equation}
    \label{proxgrad.iter}
    x^{k+1} =
    \underset{x \in \mathbb{R}^n} {\arg\min} \quad h(x) + \frac{\norm{x-(x^k - \lambda_k\nabla g(x^k))}^2}{2\lambda_k}.
\end{equation}
By incorporating problem constraints into \eqref{grad.iter} and \eqref{proxgrad.iter}, other variants can be obtained using different stepsize sequences in \eqref{grad.iter} and/or use projected gradient steps as follows
\begin{equation}
\begin{aligned}
    \label{grad.iter2}
    z^{k+1} &= P_X(x^k - \lambda_{1k}\nabla g(x^k));\\
    x^{k+1} &= P_X(z^{k+1} - \lambda_{2k}\nabla h(x^{k+1}))
\end{aligned}
\end{equation}
maintaining both sequences $z^k$ and $x^k$ within the constraint set $X$.

Alternatively, \cite{bertsekas2009convex,bertsekas2011incremental} proposed a \textit{proximal-subgradient} algorithm as a variant to \eqref{proxgrad.iter} by taking a sugradient \cite{rockafellar1981theory} step with respect to a general, possibly nondifferentiable, function $g$ in \eqref{grad.iter} rather than a gradient step, thereby relaxing the differentiability and nonincremental property of the proximal gradient method \cite{bertsekas2011incrementalsurvey}.
\begin{remark}
Note that the projection $P_X$ in \eqref{grad.iter2} can be relaxed in either iterations to speed up computation at the expense of potentially inexact solutions.
\end{remark}

In reduced-precision solvers, however, we slightly modify \eqref{proxgrad.iter} by allowing for some perturbations to take place in the gradient step as well in the proximal step to cope with data, hardware and software uncertainties such as \textit{Estimator bias} in approximated gradients \cite{rosasco2019convergence, atchade2014stochastic,nitanda2014stochastic}, \textit{Round-off errors} in \textit{Finite-Precision} arithmetic \cite{higham2002accuracy,higham2020sharper}, \textit{Loop Perforation} \cite{sidiroglou2011managing}, \textit{Voltage Scaling} \cite{hegde1999energy} and \textit{Early Stopping} criteria in approximate computing frameworks \cite{meng2009best}. Incorporating such errors into \eqref{proxgrad.iter} yields the following \textit{approximate proximal-gradient descent} version:
\begin{equation}
\label{AxPGD0}
\begin{aligned}
   x^{k+1} &\in \{x \in C^k_{\epsilon_h}:\frac{1}{2\lambda_k} \norm{x-(x^k - \lambda_{k}\nabla^{\epsilon_g} g(x^k))}_{2}^2+ h(x) \leq \epsilon_h^k \\+ &\underset{z \in \mathbb{R}^n} {\inf} \quad  \frac{1}{2\lambda_k} \norm{z-(x^k - \lambda_{k}\nabla^{\epsilon_g} g(x^k))}_{2}^2+ h(z)\}
\end{aligned}
\end{equation}
with a generic proximity mapping error $\epsilon_h^k$ and a suboptimal set $C^k_{\epsilon_h}$ (a perturbed, $\epsilon_h$-enlarged set~\cite{PertOpBurachik1997enlargement} of the exact proximal set $C^k$). $\nabla^{\epsilon_g^k} g$ stands for the inexact gradient of $g$ calculated at iteration $k$ with some general error $\epsilon_g^k$. Errors $\epsilon_g$ and $\epsilon_h$ are deterministic in nature but since they are not directly measurable they are often considered as random variables for the sake of analysis \cite{higham2002accuracy}.

 If we consider the propagation of errors appearing in \eqref{AxAPGD0} in the extrapolation step of  \eqref{extrapolate} then we obtain the following \textit{approximate accelerated  proximal-gradient descent} version:
\begin{equation}
\label{AxAPGD0}
\begin{aligned}
&
\begin{split}
   x^{k+1} \in \{x \in C^k_{\epsilon_h}:\frac{1}{2\lambda_k} \norm{x-(y^k - \lambda_{k}\nabla^{\epsilon_g} g(y^k))}_{2}^2+ h(x) \leq \epsilon_h^k \\+ \underset{z \in \mathbb{R}^n} {\inf} \quad  \frac{1}{2\lambda_k} \norm{z-(y^k - \lambda_{k}\nabla^{\epsilon_g} g(y^k))}_{2}^2+ h(z)\},
\end{split}\\
&y^k = x^{k} +\beta_k (x^{k}-x^{k-1}),
\end{aligned}
\end{equation}
where the gradient step is taken with respect to $y^k$, which is a non-convex combination of the successive iterates $x^k$ and $x^{k-1}$ with a momentum parameter $\beta_k$ \cite{beck2009fast, nesterov1983method}. Notice that if we set $\beta_k = 0$ in \eqref{AxAPGD0} then we revert back to the basic nonaccelerated scheme \eqref{AxAPGD0}.
%
%

Likewise, another instance that stems from the same family of splitting operator algorithms is the Alternating Direction Method of Multipliers (ADMM)~\cite{peaceman1955numerical, douglas1955numerical, douglas1956numerical, douglas1964general,glowinski1975approximation,gabay1976dual}. This algorithm is typically applied to solve a linearly constrained version of problem~\eqref{gen.prob.1}

\begin{equation}
  \label{Eq:Problem}
  \begin{split}
      &\underset{x \in \mathbb{R}^n,z \in \mathbb{R}^m}{\text{minimize}} \,\,\,
      f(x) := g(x) + h(z)\,,\\
      &\text{subject to} \,\,\,
      Ax+Bz = c\,,  
  \end{split}
\end{equation}
where $g$ and $h$ are possibly convex and nondifferentiable functions, $A \in \mathbb{R}^{p\times n}$ and $B \in \mathbb{R}^{p\times m}$. ADMM-type algorithms have a wide spectrum of applications in machine learning \cite{dhar2015admm,ding2019stochastic,liu2020admm, holmes2021nxmtransformer}, artificial intelligence \cite{yang2018admm,li2019admm}, MIMO detection \cite{un2019deep}, image reconstruction \cite{yang2018admm}, compressed sensing \cite{mota2013d, mota2011distributed, un2019deep} and model predictive control \cite{mota2014distributed, zhang2017embedded,darup2019towards,peccin2019fast, cheng2020semi, rey2016admm, tang2019distributed}.

To apply ADMM to \eqref{Eq:Problem}, we first form the \textit{augmented Lagrangian}
\begin{equation}
\label{Lagrangian}
\begin{split}
    \mathcal{L}_\rho(x,z, y) =& g(x)+h(z) + y^\top (Ax+Bz - c) 
    + \frac{\rho}{2}\norm{Ax+Bz-c}_2^2     
\end{split}
\end{equation}
where $y\in \mathbb{R}^p
$ is the Lagrange multiplier associated with the linear constraint in \eqref{Eq:Problem}, and $\rho > 0$ is the dual step-size parameter. ADMM is then obtained by sequentially minimizing the augmented Lagrangian \eqref{Lagrangian} with respect to $x$ and $z$, and then updating $y$, as follows
\begin{subequations}
\label{ADMM0}
    \begin{alignat}{3}
    &x^{k+1} = \underset{x}{\arg\min}\quad \mathcal{L}_\rho(x,z^k, y^k)&\\
    &z^{k+1} = \underset{z}{\arg\min}\quad \mathcal{L}_\rho(x^{k+1},z, y^k)&\\
    &y^{k+1} = y^k+\rho(Ax^{k+1}+Bz^{k+1}-c).&
    \end{alignat}
\end{subequations}
The equivalent \textit{scaled proximal ADMM} is given by 
\begin{subequations}
\label{ADMMProx}
    \begin{alignat}{3}
    &x^{k+1} = \underset{x}{\arg\min}\quad g(x)+\frac{1}{2\lambda}\|Ax+Bz^k-c+v^k\|_2^2&\label{ADMMProxSub1}\\
    &z^{k+1} = \underset{z}{\arg\min}\quad h(z)+\frac{1}{2\lambda}\|Ax^{k+1}+Bz-c+v^k\|_2^2&\label{ADMMProxSub2}\\
    &v^{k+1} = v^k+(Ax^{k+1}+Bz^{k+1}-c).&
    \end{alignat}
\end{subequations}
where $v^k=({1}/{\rho})y^k$ and $\lambda = 1/\rho$.

By adding a quadratic function in \eqref{ADMMProxSub1} and \eqref{ADMMProxSub2} penalizing the difference between the variable and its previous value, a term known as \textit{proximal penalty}, yields
\begin{subequations}
\label{WLM-ADMM}
    \begin{alignat}{3}
    &x^{k+1} = \underset{x}{\arg\min}\quad g(x)+\frac{1}{2\lambda}\|Ax+Bz^k-c+v^k\|_2^2+\frac{\lambda_x}{2}\|x-x^k\|_2^2&\\
    &z^{k+1} = \underset{z}{\arg\min}\quad h(z)+\frac{1}{2\lambda}\|Ax^{k+1}+Bz-c+v^k\|_2^2+\frac{\lambda_z}{2}\|z-z^k\|_2^2&\\
    &v^{k+1} = v^k+(Ax^{k+1}+Bz^{k+1}-c),&
    \end{alignat}
\end{subequations}
where $\lambda_x^k$ and $\lambda_z^k$ are positive scalars that can possibly vary from iteration to iteration. Defining
\begin{subequations}
\label{WLM-ADMMProx2Norm}
    \begin{alignat}{3}
    & \gamma_{1_k} = \lambda_x  x^k-\frac{\lambda_x}{\lambda}A^\top  \bigg(Bz^k-c+v^k\bigg)&\\
    & \gamma_{2_k} =  \lambda_z  z^k-\frac{\lambda_z}{\lambda}B^\top  \bigg(Ax^{k+1}-c+v^k\bigg)&\\  
&\mathcal{M}_{\frac{1}{\lambda_x} g}(\gamma_{1_k}) = \underset{x}{\inf}\quad g(x)+\frac{\lambda_x}{2}\|x -\gamma_{1_k}\|_2^2&\\
&\mathcal{M}_{\frac{1}{\lambda_z} h}(\gamma_{2_k}) = \underset{z}{\inf}\quad h(z)+\frac{\lambda_z}{2}\|z -\gamma_{2_k}\|_2^2&,
    \end{alignat}
\end{subequations}

\eqref{WLM-ADMM} can be rewritten as
\begin{subequations}
\label{WLM-ADMMProx}
    \begin{alignat}{3}
    &x^{k+1} =  \gamma_{1_k} - \frac{1}{\lambda_x} \nabla \mathcal{M}_{\frac{1}{\lambda_x} g}(\gamma_{1_k})&\\
    &z^{k+1} = \gamma_{2_k} - \frac{1}{\lambda_z} \nabla \mathcal{M}_{\frac{1}{\lambda_z} h}(\gamma_{2_k})&\\
    &v^{k+1} = v^k+A\gamma_{1_k}+B\gamma_{2_k}-c- \frac{1}{\lambda_x}A\nabla \mathcal{M}_{\frac{1}{\lambda_x} g}(\gamma_{1_k})-\frac{1}{\lambda_z}B\nabla \mathcal{M}_{\frac{1}{\lambda_z} h}(\gamma_{2_k})&.
    \end{alignat}
\end{subequations}


Incorporating approximate computational errors into \eqref{WLM-ADMMProx} yields the following \textit{approximate ADMM} version:
\begin{equation}
\begin{aligned}
   x^{k+1} &\in \bigg\{x \in C^k_{\epsilon_g}:g(x)+\frac{\lambda_x}{2}\|x -\gamma_{1_k}\|_2^2 \leq \epsilon_g^k+\underset{y \in \mathbb{R}^n} {\inf} \quad  g(y)+\frac{\lambda_x}{2}\|y -\gamma_{1_k}\|_2^2\bigg\}\\
   z^{k+1} &\in \bigg\{z \in C^k_{\epsilon_h}:h(z)+\frac{\lambda_z}{2}\|z -\gamma_{2_k}\|_2^2 \leq \epsilon_h^k+\underset{w \in \mathbb{R}^n} {\inf} \quad  h(w)+\frac{\lambda_z}{2}\|w -\gamma_{2_k}\|_2^2\bigg\}\\
    v^{k+1} &= v^k+Ax^{k+1}+Bz^{k+1}-c,
\end{aligned}
\end{equation}
with a generic proximal computation error $\epsilon_{(\cdot)}$ and its associated suboptimal set $C^k_{\epsilon_{(\cdot)}}$.

Since approximation errors can adversely affect arithmetic operations in approximate computing designs \cite{ragavan2017pushing}, we need a verification framework whereby the propagation of such errors throughout the algorithm in question is better understood. This would consequently help us determine the extent to which a particular algorithm can tolerate hardware and software uncertainties, hence formulate less-conservative assertions that can be used in application's performance vs computation efficiency trade-offs. 
\subsection{Our approach}
We propose a general class of algorithms that stems from ADMM but with weighted Lagrangian and generalized proximal terms. We prove that most existing proximal-based optimisation algorithms are instances of this class. We then adopt an analytical approach and we propagate the approximation errors through iterations. We analyse the error contributing terms, and assuming some rare extreme event approximation error models, we apply suitable concentration inequalities to eliminate the dependency on the iteration counter and establish \textit{a priori} probabilistic error bounds that hold with some probability which we quantify with a lower bound. We apply the same line of proof to proximal-gradient, accelerated proximal-gradient as well as the weighted-lagrangian generalized proximal ADMM. Assuming subgaussian error distributions, instead of subexponential, makes it possible to exploit the second-order statistical information of approximation errors to improve the bounds using a set of concentration inequalities that are best-suited to light-tailed error models.
\subsection{Applications}
 In applications that tolerate some loss in accuracy, we can use reduced precision computations such as mixed-precision fixed-point arithmetic or any custom representation to promote some gain in resource utilization or power efficiency. In test-driven development of complex algorithms, we typically require more thoughtful and accurate assertions that can accurately predict the application performance (convergence in our case), and hence leads to more efficient and less conservative designs. Probabilistic assertions (convergence bounds) are a powerful tool when it comes to ML and AI applications which do not require convergence to hold in all cases. In a previous work, we noticed that round-off errors manifest some light-tailed rare extreme events behaviour. This work builds on top of the latter to derive probabilistic bounds (assertions) for convergence of approximate first-order solvers when running on reduced precision reconfigurable computing machines such as FPGAs. In this work we mimic embedded hardware approximation errors by injecting random noise with high range to variance ratio to proximal-gradient algorithm iterations when used as a solver in model predictive control of a spacecraft system. 
\subsection{Contributions}
We improve upon our previous work~\cite{hamadouche2021sspd, hamadouche2022sharper} and establish convergence bounds on the objective function values of approximate proximal-gradient descent (AxPGD), approximate accelerated proximal-gradient descent (AxAPGD) and the generalized Proximal ADMM (AxWLM-ADMM) schemes. We consider approximate gradient and approximate proximal computations with errors that manifest light-tailed rare extreme events behaviour and quantify their contribution to the final residual suboptimality in the objective function value. We establish probabilistic asymptotic and non-asymptotic convergence bounds that can be used as probabilistic assertions that only depend on the range and variance of approximation errors. We apply the derived bound to evaluate the basic proximal-gradient algorithm in a sparse model predictive control problem and compare its sharpness with previously derived bounds. 
\subsection{Organization}
This paper is organized as follows. In Section~\ref{Section:Related Work} we present the most recent convergence bounds that are mostly related to the current work. In Section~\ref{Section:Main Results}, we propose a general class of operator splitting algorithms (WLM-ADMM), from which we instantiate the classical proximal-gradient descent algorithm (PGD) the accelerated proximal-gradient descent algorithm (APGD) and their approximate variants (AxWLM-ADMM, AxPGD and AxAPGD). We then state the main assumptions and error models which are used in the main results, which are also proved within the same section. We validate our proposed approach with an MPC application in Section~\ref{Section: Experimental Results} and we conclude the paper by a summary and possible future directions and extensions in Section~\ref{Section: Conclusions}.
\section{RELATED WORK}
\label{Section:Related Work}
The work in \cite{schmidt2011convergence} establishes the following ergodic convergence bound in terms of function values of the averaged iterates for the basic approximate proximal-gradient~\eqref{AxPGD0} descent algorithm
  \begin{equation}
    \label{schmidt1}
    \begin{split}
    &f\bigg(\frac{1}{k}\sum_{i=1}^{k}x^{i}\bigg)  
    -
    f(x^\star) 
    \leq 
    \frac{L}{2k}
    \Big[\norm{x^\star-x^0}_{2}+ 2A_k + \sqrt{2B_k}\Big]^2\\
    &\quad A_k = \sum_{i=1}^{k}\Big(\frac{\|\epsilon_g^{i}\|_2}{L}+\sqrt{\frac{2\epsilon_h^{i}}{L}}\Big), \quad B_k = \sum_{i=1}^{k} \frac{\epsilon_h^{i}}{L},
    \end{split}
  \end{equation}
where $x^\star$ is an optimal solution of \eqref{Eq:Problem}, $L$ is the \textit{Lipschitz} constant of the gradient, and $x^0$ is the initialization vector. The same work also analyzed the approximate accelerated proximal gradient~\eqref{AxAPGD0} and obtained the following convergence result in terms of the function values of the iterates,
  \begin{equation}
    \label{schmidt2}
    \begin{split}
    &f\big(x^{i}\big)  
    -
    f(x^\star) 
    \leq 
    \frac{2L}{(k+1)^2}
    \Big[\norm{x^\star-x^0}_{2}+ 2\tilde{A}_k + \sqrt{2\tilde{B}_k}\Big]^2\\
    &\quad \tilde{A}_k = \sum_{i=1}^{k} i \Big(\frac{\|\epsilon_g^{i}\|_2}{L}+\sqrt{\frac{2\epsilon_h^{i}}{L}}\Big), \quad \tilde{B}_k = \sum_{i=1}^{k} \frac{i^2\epsilon_h^{i}}{L}.
    \end{split}
  \end{equation}
Assuming random approximation error sequences, $\epsilon_{g_\Omega}^{k}$ and $\epsilon_{h_\Omega}^{k}$, that are drawn from some sample space $\Omega$ of a given probability measure $\mathbb{P}$, in our previous work~\cite{hamadouche2022sharper}, we derived sharper probabilistic convergence error bounds that do not depend on the iteration counter $k$, which makes them more realistic assertions for testing convergence. The bounds that we derived are given below for the basic approximate proximal-gradient descent~\eqref{AxPGD0}
\begin{equation}
    f\bigg(\frac{1}{k}\sum_{i=1}^{k}x_{_\Omega}^{i}  \bigg)-f(x^\star)
\lessapprox \frac{1}{k}\sum_{i=1}^{k}\epsilon_{g_\Omega}^{i} + \gamma M_{\nabla g}{D_x}\sqrt{\frac{n}{k}}|\delta|\norm{x^\star-x^0}_{2}+
    \frac{{D_x^2}}{2sk}\norm{x^\star-x^0}_{2}^2
    \label{Eq:2.3}.
\end{equation}
And for the accelerated proximal-gradient descent~\eqref{AxAPGD0} we obtained the following 
\begin{equation}
   f(x_{_\Omega}^{k+1})-f(x^\star) \leq \frac{1}{\alpha_k^2} \bigg[S_{\epsilon_{g_\Omega}}+S_{r_{_\Omega}}+S_{\epsilon_{h_\Omega}} +\frac{1}{2s}\norm{x^\star-x^0}_{2}^2\bigg]
   \label{Eq:2.4},
\end{equation}
where 
\begin{align}
    S_{\epsilon_{h_\Omega}}&={\varepsilon_0\sum_{i=0}^{k}{i}^2} + \frac{\gamma}{2}\sqrt{\sum_{i=1}^{k}i^{4}(\epsilon_{h_\Omega}^i)^2},\\
    S_{\epsilon_{g_\Omega}}&=\gamma|\delta|M_{\nabla g}{D_u^2 \norm{x^0-x^\star}_2^2\sqrt{n\sum_{i=1}^{k} i^2}},\\
    S_{r_{_\Omega}}&=\gamma{D_u^2 \norm{x^0-x^\star}_2^2\sqrt{\frac{2}{s}\sum_{i=1}^{k} i^2\epsilon_h^i}}
\end{align}
with probability at least $\left(1-{4}\exp(-\gamma^2/2)\right)$, where $x^\star$ is any solution of~\eqref{gen.prob.1}, $M_{\nabla g} = \underset{i \in \mathbb{N}_{+}} \sup\bigg\{\norm{\nabla g(x^i)}_{\infty}\bigg\}$, $\varepsilon_0$ and $\delta$ are upper bounds on the errors, $s$ is the stepsize, $D_x$, $D_u$ and $\gamma$ are some scalar parameters that depend on the probability of the assertion, and $\Omega$ stands for the sample space. The summation concentration bounds depend on the range $[a_i, b_i]$ of the individual random variables. Often, the variance $\sigma_i^2$ of each random variable is significantly smaller than the range. In such situations, the Bernstein inequality provides useful bounds.


Assuming subexponential distribution of the approximation errors, the previously derived probabilistic bounds are based on Hoeffding concentration inequality under boundedness and conditional mean independence (CMI) assumptions on the errors~\cite{hamadouche2022sharper, hamadouche2021sspd}. However, having analysed fixed-point arithmetic computational errors, we have noticed that these errors manifest some light-tail rare extreme events behaviour. 
\section{MAIN RESULTS}
\label{Section:Main Results}
Before we present our new results, let us state the main class of algorithms that we will consider during the convergence analysis.
\subsection{Setup and algorithms}
Defining
\begin{subequations}
    \begin{alignat}{3}
& \Lambda_{1_k} = \frac{1}{\lambda} A^\top L_k  A + M_x^k,&\\
& \gamma_{1_k}(x^k,z^k,v^k) = M_x^k x^k-\frac{1}{\lambda}A^\top L_k  (Bz^k-c+v^k),&\\
& \Lambda_{2_k} = \frac{1}{\lambda} B^\top L_k  B + M_z^k&\\
& \gamma_{2_k}(x^{k+1},z^k,v^k) = M_z^k z^k-\frac{1}{\lambda}B^\top L_k  (Ax^{k+1}-c+v^k).&    
    \end{alignat}
\label{Definition:LambdaGamma}
\end{subequations}

Then the generalized Moreau envelope can be defined as
\begin{subequations}
\begin{alignat}{3}
&\mathcal{M}_{\frac{1}{\lambda_x} g}(\gamma_{1_k}) = \underset{x}{\inf}\quad \frac{1}{\lambda_x} g(x)+\frac{1}{2}\|x -\Lambda_{1_k}^{-1}\gamma_{1_k}\|_{\Lambda_{1_k}}^2,&\\
&\mathcal{M}_{\frac{1}{\lambda_z} h}(\gamma_{2_k}) = \underset{z}{\inf}\quad \frac{1}{\lambda_z} h(z)+\frac{1}{2}\|z -\Lambda_{2_k}^{-1}\gamma_{2_k}\|_{\Lambda_{2_k}}^2&,
\end{alignat}
\end{subequations}
and the Weighted-Lagrangian Generalized-Proximal ADMM (WLM-ADMM)~\cite{hamadouche2022probabilistic} iterations are computed as
\begin{subequations}
    \begin{alignat}{3}
    &x^{k+1} =  \gamma_{1_k} - \frac{1}{\lambda_x} \nabla \mathcal{M}_{\frac{1}{\lambda_x} g}(\gamma_{1_k}),&\\
    &z^{k+1} = \gamma_{2_k} - \frac{1}{\lambda_z} \nabla \mathcal{M}_{\frac{1}{\lambda_z} h}(\gamma_{2_k}),&\\
    &v^{k+1} = v^k+A\gamma_{1_k}+B\gamma_{2_k}-c- \frac{1}{\lambda_x}A\nabla \mathcal{M}_{\frac{1}{\lambda_x} g}(\gamma_{1_k})-\frac{1}{\lambda_z}B\nabla \mathcal{M}_{\frac{1}{\lambda_z} h}(\gamma_{2_k})&.
    \end{alignat}
\end{subequations}
If we consider approximate computational errors in \eqref{WLM-ADMMProx} then we obtain the following approximate (AxWLM-ADMM) algorithm
\begin{subequations}
\label{AxWLM-ADMMProx}
\begin{align}
   \begin{split}
    x^{k+1} &\in \bigg\{x \in C^k_{\epsilon_g}:g(x)+\frac{\lambda_x}{2}\|z -\Lambda_{1_k}^{-1}\gamma_{1_k}\|_{\Lambda_{1_k}}^2 \leq \epsilon_g^k+\underset{y \in \mathbb{R}^n} {\inf} g(y)\\&\quad+\frac{\lambda_x}{2}\|y -\Lambda_{1_k}^{-1}\gamma_{1_k}\|_{\Lambda_{1_k}}^2\bigg\},
   \end{split}\\
   \begin{split}
    z^{k+1} &\in \bigg\{z \in C^k_{\epsilon_h}:h(z)+\frac{\lambda_z}{2}\|z -\Lambda_{2_k}^{-1}\gamma_{2_k}\|_{\Lambda_{2_k}}^2 \leq \epsilon_h^k+\underset{w \in \mathbb{R}^n} {\inf} h(w)\\&\quad+\frac{\lambda_z}{2}\|w -\Lambda_{2_k}^{-1}\gamma_{2_k}\|_{\Lambda_{2_k}}^2\bigg\},
   \end{split}\\
   v^{k+1} &= v^k+A\gamma_{1_k}+B\gamma_{2_k}-c- \frac{1}{\lambda_x}A\nabla \mathcal{M}_{\frac{1}{\lambda_x} g}(\gamma_{1_k})-\frac{1}{\lambda_z}B\nabla \mathcal{M}_{\frac{1}{\lambda_z} h}(\gamma_{2_k}),
\end{align}
\end{subequations}
where $\epsilon_g$ and $\epsilon_h$ are errors with given models and assumptions. 

For unconstrained problems such as~\eqref{gen.prob.1}, if we choose $M_x = 0$ and $M_z = I$ in~\eqref{Definition:LambdaGamma}, then we obtain the following approximate proximal-gradient descent algorithm (AxPGD)
\begin{subequations}
\label{AxPGD}
    \begin{align}
    x^{k+0.5} &=  x^k - \frac{1}{\lambda_x} \nabla g^{\epsilon_g^k}(x^k),\label{AxPGD-x-update}\\
    \begin{split}
    x^{k+1} &\in \bigg\{z \in C^k_{\epsilon_h}:h(z)+\frac{\lambda_z}{2}\|z -x^{k+0.5}\|_2^2 \leq\epsilon_h^k+\underset{w \in \mathbb{R}^n} {\inf} h(w)\\ &\quad+\frac{\lambda_z}{2}\|w -x^{k+0.5}\|_2^2\bigg\},
    \end{split}\\
    &=\operatorname{prox}_{s h}^{\epsilon_{h}^{k}}     \Big[x^{k} - s\big(\nabla g(x^{k})+\epsilon_g^{k}\big)\Big]\,.
    \end{align}
\end{subequations}
where $s = \frac{1}{\lambda_x}$ and the inexact $\epsilon_g^k$-gradient $\nabla g^{\epsilon_g^k}(x^k)$ is defined as
\begin{equation}
    \nabla g^{\epsilon_g^k}(x^k) := \nabla g (x^k)+ \epsilon_g^k.
\end{equation}
Using the extrapolation step $y^k = x^k +\beta_k(x^k-x^{k-1})$, with some positive scalar (momentum) sequence $\{\beta_k\}$, instead of $x^k$ in~\eqref{AxPGD-x-update} we obtain the approximate accelerated proximal-gradient descent (AxAPGD)
    \begin{subequations}
        \label{AxAPGD}
        \begin{align}
        y^k &= x^k +\beta_k(x^k-x^{k-1}),\\
        x^{k+0.5} &=  y^k - \frac{1}{\lambda_x} \nabla g^{\epsilon_g^k}(y^k),\\
        \begin{split}
        x^{k+1} &\in \bigg\{z \in C^k_{\epsilon_h}:h(z)+\frac{\lambda_z}{2}\|z -x^{k+0.5}\|_2^2 \leq\epsilon_h^k+\underset{w \in \mathbb{R}^n} {\inf} h(w)\\ &\quad+\frac{\lambda_z}{2}\|w -x^{k+0.5}\|_2^2\bigg\},
        \end{split}\\
        &=\operatorname{prox}_{s h}^{\epsilon_{h}^{k}}     \Big[y^{k} - s\big(\nabla g(y^{k})+\epsilon_g^{k}\big)\Big]\,.
        \end{align}
    \end{subequations}
\subsection{Problem assumptions and error models}
Let us now state the main assumptions and error models that we will adopt during the analysis.
\begin{Assumption}[Assumptions on the problem]
  \label{Ass:OptimizationProblem}
  \hfill    
            
  \medskip  
  \noindent
  \begin{itemize}

    \item The function $h:\mathbb{R}^n \to \mathbb{R} \cup \{+\infty\}$ is
      closed, proper, and convex.

    \item The function $g\,:\, \mathbb{R}^{n} \to \mathbb{R}$ is convex and
      differentiable,
      and its gradient $\nabla g\,:\, \mathbb{R}^n \to \mathbb{R}^n$ is
      Lipschitz-continuous with constant $L > 0$, that is,
      \begin{equation}
        \label{Eq:LipschitzContinuity}
        \big\|\nabla g(y) - \nabla g(x)\big\|_2 \leq L\big\|y - x\big\|_2\,,
      \end{equation}
      for all $x$, $y \in \mathbb{R}^n$, where $\|\cdot\|_2$ stands for the
      standard Euclidean norm. 

    \item The set of optimal solutions of~\eqref{gen.prob.1} is nonempty: 
      \begin{equation}
        \label{Eq:NonemptySetSolutions}
        X^\star := \big\{x \in \mathbb{R}^n\, :\, f(x) \leq
        f(z),\,\,\text{\emph{ for all} $z \in \mathbb{R}^n$}\big\} \neq
        \emptyset\,.
      \end{equation}
  \end{itemize}
\end{Assumption}
Let us now introduce the main concentration inequality that will be applied to the error terms.

\begin{lemma}[Bernstein Inequality~\cite{wainwright2019high}]
Suppose that the random variables $X_i$, $i= 1,\dots,n$ are
independent and sub-Gaussian satisfying the following:
\begin{itemize}
    \item there exists a constant $M > 0$ such that $\mathbb{P}\left(|X_i-\mathbb{E}(X_i)|\leq M\right) = 1$ for each $i \in [1, n]$, and
    \item the variance of each $X_i$ is $\sigma_i^2$.
\end{itemize} 
If we define $S = \sum_{i=1}^{k}X_i$ then for all $t \geq 0$, we have
\begin{equation} 
    \label{Lemma:BernsteinBound}
    \mathbb{P}\bigg(|S - \mathbb{E}\big[S\big]|\geq t\bigg)\leq 2\textup{exp}\bigg(\frac{-t^2}{2\sum_{i=1}^n \sigma_i^2 + \frac{2}{3}Mt}\bigg). 
\end{equation}
\end{lemma}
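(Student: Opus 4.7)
The plan is to follow the classical Chernoff--Cram\'er approach: bound the moment generating function (MGF) of each centered summand under the boundedness and variance hypotheses, combine across $i$ using independence, apply Markov's inequality to $e^{\lambda(S - \mathbb{E}[S])}$, and finally optimize over the free parameter $\lambda > 0$. A two-sided bound then follows by applying the one-sided estimate to both $S - \mathbb{E}[S]$ and $-(S - \mathbb{E}[S])$ and taking a union bound, accounting for the factor $2$ on the right-hand side of~\eqref{Lemma:BernsteinBound}.

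The first step is to center by setting $Y_i = X_i - \mathbb{E}[X_i]$, so $|Y_i| \leq M$ a.s.\ and $\mathbb{E}[Y_i^2] = \sigma_i^2$. The key moment bound is
\begin{equation*}
   \mathbb{E}\bigl[|Y_i|^k\bigr] \leq M^{k-2}\,\sigma_i^2 \qquad \text{for every integer } k \geq 2,
\end{equation*}
which follows directly from $|Y_i|^k \leq M^{k-2} Y_i^2$. Expanding the MGF in a Taylor series and using this bound together with the elementary estimate $k! \geq 2 \cdot 3^{k-2}$ for $k \geq 2$ leads to
\begin{equation*}
   \mathbb{E}\bigl[e^{\lambda Y_i}\bigr] \leq 1 + \frac{\lambda^2 \sigma_i^2/2}{1 - \lambda M/3} \leq \exp\!\left(\frac{\lambda^2 \sigma_i^2/2}{1 - \lambda M/3}\right)
\end{equation*}
for all $\lambda \in [0, 3/M)$. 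This MGF bound is the technical crux of the proof; everything afterwards is calibration.

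Having this, independence gives $\mathbb{E}[e^{\lambda(S - \mathbb{E}[S])}] \leq \exp\!\bigl(\tfrac{\lambda^2 V/2}{1 - \lambda M/3}\bigr)$ with $V := \sum_{i=1}^n \sigma_i^2$, and Markov's inequality yields
\begin{equation*}
   \mathbb{P}\bigl(S - \mathbb{E}[S] \geq t\bigr) \leq \exp\!\left(-\lambda t + \frac{\lambda^2 V/2}{1 - \lambda M/3}\right).
\end{equation*}
The natural choice $\lambda = t/(V + Mt/3)$ (which lies in $[0, 3/M)$ for all $t \geq 0$) minimizes a convenient upper envelope of the right-hand side and produces the exponent $-t^2/(2V + 2Mt/3)$, matching~\eqref{Lemma:BernsteinBound}. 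Applying the same argument to $-Y_i$ (for which the hypotheses are identical) and doubling via a union bound completes the proof.

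The main obstacle I anticipate is the per-variable MGF bound: controlling the Taylor tail requires both the uniform bound $M$ (to tame high-order moments) and the variance $\sigma_i^2$ (to keep the $k=2$ term sharp), and the factor $1/3$ in the denominator comes specifically from the $k! \geq 2\cdot 3^{k-2}$ estimate, so any looser combinatorial bound would degrade the constant in~\eqref{Lemma:BernsteinBound}. The subsequent optimization over $\lambda$ is then a routine calculus exercise.
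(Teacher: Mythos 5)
The paper does not prove this lemma at all --- it is imported verbatim from Wainwright's textbook via the citation --- so there is no in-paper argument to compare against. Your proposal is the standard and correct Chernoff--Cram\'er proof of Bernstein's inequality: the moment bound $\mathbb{E}[|Y_i|^k]\leq M^{k-2}\sigma_i^2$, the estimate $k!\geq 2\cdot 3^{k-2}$, the resulting MGF bound for $\lambda\in[0,3/M)$, and the choice $\lambda = t/(V+Mt/3)$ all check out and reproduce the stated exponent $-t^2/(2\sum_i\sigma_i^2+\tfrac{2}{3}Mt)$ exactly, with the factor $2$ from the union bound over the two tails.
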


In order to apply these bounds to the approximation errors in the proximal gradient descent algorithm, we need to define new models for the gradient and proximal errors. The types of errors that our analysis covers, i.e., random errors with rare extreme events are introduced next.

\begin{Model}
\label{ErrModel3}
Under this model, the approximate gradient computations are subject to additive random error $\epsilon_{g_\Omega}$ whose magnitude is bounded and whose variance is known. Specifically, the evaluation of the gradient of $g$ in~\eqref{gen.prob.1} is approximated as
\begin{equation}
    \label{Eq:Model1}
    \nabla g^{\epsilon_{g_\Omega}}(x^k) = \nabla g (x^k)+ \epsilon_{g_\Omega}^k,
\end{equation}
with  $\epsilon_{g_\Omega}^k$, for $k \geq 1$, satisfying:
\begin{enumerate}
\item $\epsilon_{g_\Omega}^k$ is conditionally mean independent (CMI) of past iterates
\begin{equation}
    \mathbb{E}\big[
    \epsilon_{g_\Omega}^k
    \,\big\vert\,
    \epsilon_{g_\Omega}^1, \ldots, \epsilon_{g_\Omega}^{k-1}
    \big]=\mathbb{E}\big[\epsilon_{g_\Omega}^k\big].\,
\end{equation}
\item $\epsilon_{g_\Omega}^k$ is zero-mean 
\begin{equation}
\mathbb{E}\big[\epsilon_{g_\Omega}^k\big]
    =
    0.
\end{equation}
\item $\epsilon_{g_\Omega}^k$ is bounded \begin{equation}
    \mathbb{P}\left(\left|\left(\epsilon_{g_\Omega}^k\right)_j \right| \leq M_{\epsilon_{g}}\right) = 1\,,\quad
    \text{\emph{for all} $j = 1, \ldots, n$}
\end{equation}
\item $\epsilon_{g_\Omega}^k$ is data mean independent (DMI)
\begin{equation}
    \mathbb{E}\big[
    {\epsilon_{g_\Omega}^k }^\top x_{_\Omega}^k 
    \,\big\vert\,
    \epsilon_{g_\Omega}^1, \ldots, \epsilon_{g_\Omega}^{k-1},\,
    x_{1_\Omega}^1, \ldots, x_{1_\Omega}^{k-1}
    \big]
    =
    \mathbb{E}\big[
    {\epsilon_{g_\Omega}^k }^\top x_{_\Omega}^k 
    \big]
    =
    0.
\end{equation}
\item For each entry $\left(\epsilon_{g_\Omega}^k\right)_j$, the variance is significantly smaller than the upper bound 
\begin{equation}
    \mathbb{E}\left[(\epsilon_{g_\Omega}^k )_j^2\right] = \sigma_j^2 \ll M_{\epsilon_{g}},
\end{equation}
where  $\left(\epsilon_{g_\Omega}^k\right)_j$ denotes
the $j$-th entry of $\epsilon_{g_\Omega}^k$, and 
\begin{equation}
M_{\epsilon_{g}} = \begin{cases}
\delta \cdot \underset{i \in \mathbb{N}_{+}} \sup\bigg\{\norm{\nabla g(x^i)}_{\infty}\bigg\}, & \text{If relative error is used},\\
\delta, & \text{If absolute error is used},
 \end{cases}
 \end{equation} 
where $\delta$ is the machine precision.
\end{enumerate}
\end{Model}

\begin{Model}
\label{ErrModel4}
The inexact proximal operator $\operatorname{prox}_{\cdot}^{\epsilon_{\cdot_\Omega}}$ is subject to random approximation error $\epsilon_{\cdot_\Omega}^k$, or equivalently, to additive random error $r_{\cdot_\Omega}^k$ (in the image of the proximal operator) that satisfy the following:
\begin{enumerate}
\item $r_{_\Omega}^k$ is CMI:
\begin{equation}
      \mathbb{E}\big[
      r_{\cdot_\Omega}^k
      \,\big\vert\,
      r_{\cdot_\Omega}^1, \ldots, r_{\cdot_\Omega}^{k-1}
      \big]
      =
      \mathbb{E}\big[r_{\cdot_\Omega}^k\big]
      =
      0\,,
      \label{Eq:ResidAssumptionRandomZeroMean}
\end{equation}
\item $r_{\cdot_\Omega}^k$ is DMI:
\begin{equation}
      \mathbb{E}\big[
      {r_{\cdot_\Omega}^k }^\top x_{_\Omega}^k 
      \,\big\vert\,
      r_{\cdot_\Omega}^1, \ldots, r_{\cdot_\Omega}^{k-1},\,
      x_{1_\Omega}^1, \ldots, x_{1_\Omega}^{k-1}
      \big]
      =
      \mathbb{E}\big[
      {r_{\cdot_\Omega}^k }^\top x_{_\Omega}^k 
      \big]
      =
      0.
\end{equation}
\item
For $k\geq 1$, the proximal error $\epsilon_{\cdot_\Omega}^k$ is bounded almost surely. Specifically, for some $\varepsilon_{\cdot 0} > 0$, with probability $1$, we have that
\begin{equation}
    0 \leq \epsilon_{\cdot_\Omega}^k \leq \varepsilon_{\cdot_0}.
\end{equation}
\item
$\epsilon_{_\Omega}$ is stationary. Specifically, for all $k$, we have 
\begin{equation}
    \mathbb{E}\bigg[\epsilon_{\cdot_\Omega}^k\bigg]=\mathbb{E}\bigg[\epsilon_{\cdot_\Omega}\bigg]=\text{const.}
\end{equation}
\item The variance of $\epsilon_{\cdot_\Omega}^k$ is significantly smaller than the upper bound 
\begin{equation}
    \mathbb{E}\left[(\epsilon_{\cdot_\Omega}^k )^2\right] = \sigma_\cdot^2 \ll {\varepsilon_{\cdot 0}},
\end{equation}
\end{enumerate}
\end{Model}

\subsection{Convergence bounds for the approximate basic PGD}
Consider solving problem~\eqref{gen.prob.1} using~\eqref{AxPGD} under error models~\ref{ErrModel3} and~\ref{ErrModel4}.
\begin{theorem}[\textbf{Convergence bounds for AxPGD under approximation errors with rare extreme events}]
\label{Theorem3.2}
Consider problem~\eqref{gen.prob.1} and let Assumption~\ref{Ass:OptimizationProblem} hold. Assume that the gradient error $\{\epsilon_{g_\Omega}^k\}_{k\geq 1}$ and residual proximal error $\{r_{_\Omega}^{k}\}_{k\geq 1}$ sequences satisfy the properties of Model~\ref{ErrModel3} and Model~\ref{ErrModel4}, respectively. Let $\{x_{\Omega}^i\}$ denote a sequence generated by the approximate proximal-gradient descent algorithm in~\eqref{AxPGD} with constant stepsize $s_k = s$, for all $k$. Assume that $\norm{x_{_\Omega}^k-x_{_\Omega}^\star}_{2}^2 \leq \norm{x_{_\Omega}^0-x_{_\Omega}^\star}_{2}^2$ hold with probability $p$, for all $k$. Then, for any $\gamma >0$,
\begin{equation}
   f(x_{_\Omega}^{k+1})-f(x^\star) \leq \frac{1}{k} \bigg[S_{\epsilon_{h_\Omega}}+S_{r_{_\Omega}}+S_{\epsilon_{g_\Omega}} +\frac{1}{2s}\norm{x^\star-x^0}_{2}^2\bigg]
\end{equation}
where 
\begin{align}
    S_{\epsilon_{h_\Omega}}&=k\mathbb{E}\big[\epsilon_{h_\Omega}\big]+\gamma\sqrt{(\varepsilon_0-\mathbb{E}\big[\epsilon_{h_\Omega}\big])\mu_1 k}\\
    &=k \mathbb{E}\big[\epsilon_{h_\Omega}\big]+\gamma\sqrt{k\cdot \underset{i \in \mathbb{N}_{+}} \sup\left\{\sigma_{\epsilon_h}^2\right\}},\quad \text{for}\quad k \gg \frac{\gamma^2 (\varepsilon_0-\mathbb{E}\big[\epsilon_{h_\Omega}\big])^2}{9{\underset{i \in \mathbb{N}_{+}} \sup\left\{\sigma_{\epsilon_h}^2\right\}}},\\
    S_{\epsilon_{g_\Omega}}&=\mathbb{E}\big[\sum_{i=0}^k {\epsilon_{g_\Omega}^{i}}^\top (x^\star - x_{_\Omega}^{i+1})\big]+\gamma\sqrt{\mu_2 k\sqrt{n}\delta M_{\nabla g}\norm{x_{_\Omega}^\star-x_{_\Omega}^0}_{2}},\\
    &\approx\gamma\sqrt{k\sqrt{n}\delta M_{\nabla g}\norm{x_{_\Omega}^\star-x_{_\Omega}^0}_{2}},\\
    &=\gamma\sqrt{k\cdot {\underset{i \in \mathbb{N}_{+}} \sup\left\{\sigma_{\epsilon_g,i}^2\right\}}},\quad \text{for}\quad k \gg \frac{\gamma^2 n(\delta M_{\nabla g}\norm{x_{_\Omega}^\star-x_{_\Omega}^0}_{2})^2}{9{\underset{i \in \mathbb{N}_{+}} \sup\left\{\sigma_{\epsilon_g,i}^2\right\}}},\\
    S_{r_{_\Omega}}&=\mathbb{E}\big[-\sum_{i=0}^k \frac{1}{s}{r_{_\Omega}^{i+1}}^\top (x^\star - x_{_\Omega}^{i+1})\big]+\gamma\sqrt{\mu_3 k\sqrt{\frac{2\epsilon_h^k}{s}}\norm{x_{_\Omega}^\star-x_{_\Omega}^0}_{2}},\\
    &\lessapprox\gamma\sqrt{\mu_3 k\sqrt{\frac{2\varepsilon_0}{s}}\norm{x_{_\Omega}^\star-x_{_\Omega}^0}_{2}}\\
    &= \gamma\sqrt{k\cdot {\underset{i \in \mathbb{N}_{+}} \sup\left\{\sigma_{r,i}^2\right\}}},\quad \text{for}\quad k \gg \frac{2 \gamma^2 \varepsilon_0 \norm{x_{_\Omega}^\star-x_{_\Omega}^0}_{2}^2}{9s\cdot{\underset{i \in \mathbb{N}_{+}} \sup\left\{\sigma_{r,i}^2\right\}}}.
\end{align}
with probability (approximately) at least $p^k\left(1-4\exp(-\gamma^2/2)\right)$, where $x^\star$ is any solution of~\eqref{gen.prob.1},  $M_{\nabla g} = 1$, $s \leq 1/L$ for an absolute gradient error, and $M_{\nabla g} = \underset{i \in \mathbb{N}_{+}} \sup\bigg\{\norm{\nabla g(x^i)}_{\infty}\bigg\}$, $s_k := s \leq 1/(L+\delta)$ for a relative gradient error. $\mathbb{E}[.]$ stands for the expectation operator.
\end{theorem}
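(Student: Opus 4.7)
The starting point is the standard one-step descent inequality for the approximate proximal-gradient iteration~\eqref{AxPGD}. Using the $L$-smoothness of $g$, convexity of $g$ and $h$, and the $\epsilon_h^k$-subdifferential characterization of the inexact prox (which puts $x^{k+1}$ in the $\epsilon_h$-enlarged subdifferential set of the quadratic majorant), I would obtain a per-iteration bound of the form
\begin{equation*}
 f(x_{_\Omega}^{k+1}) - f(x^\star)
 \leq \frac{1}{2s}\Big(\|x_{_\Omega}^{k}-x^\star\|_2^2 - \|x_{_\Omega}^{k+1}-x^\star\|_2^2\Big)
 + \epsilon_{h_\Omega}^k
 + {\epsilon_{g_\Omega}^k}^\top(x^\star - x_{_\Omega}^{k+1})
 - \tfrac{1}{s}{r_{_\Omega}^{k+1}}^\top(x^\star - x_{_\Omega}^{k+1}),
\end{equation*}
where $r_{_\Omega}^{k+1}$ is the residual of the inexact prox mapping. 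This is essentially the standard inexact PGD inequality (cf.\ \cite{schmidt2011convergence}) but with the error terms kept explicit so we can treat them stochastically afterwards.

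Next, I would sum this inequality from $i = 0$ to $k-1$. The first term telescopes, leaving $\tfrac{1}{2s}\|x^0-x^\star\|_2^2$ after dropping the nonnegative final term. Dividing by $k$ and using convexity (Jensen) on the left-hand side writes the averaged suboptimality as
\begin{equation*}
 f(x_{_\Omega}^{k+1}) - f(x^\star)
 \leq \frac{1}{k}\Big[
   \underbrace{\sum_{i=0}^{k-1}\epsilon_{h_\Omega}^i}_{T_1}
 + \underbrace{\sum_{i=0}^{k-1}{\epsilon_{g_\Omega}^i}^\top(x^\star - x_{_\Omega}^{i+1})}_{T_2}
 - \underbrace{\tfrac{1}{s}\sum_{i=0}^{k-1}{r_{_\Omega}^{i+1}}^\top(x^\star - x_{_\Omega}^{i+1})}_{T_3}
 + \tfrac{1}{2s}\|x^0-x^\star\|_2^2\Big].
\end{equation*}
The three random sums $T_1, T_2, T_3$ are exactly the terms that the theorem denotes $S_{\epsilon_{h_\Omega}}$, $S_{\epsilon_{g_\Omega}}$, $S_{r_{_\Omega}}$.

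The core step is then to apply Bernstein's inequality (Lemma~1) to each of $T_1, T_2, T_3$ separately. For this I would use the CMI/DMI properties of Models~\ref{ErrModel3}–\ref{ErrModel4} to justify that each summand is zero-mean conditionally on the past (so Bernstein applies in the martingale difference form), use the almost-sure range bounds ($\varepsilon_0$ for $\epsilon_{h_\Omega}$, and for the inner products bound $|\langle \epsilon_{g_\Omega}^i, x^\star - x_{_\Omega}^{i+1}\rangle| \leq \sqrt{n}\,M_{\epsilon_g}\|x_{_\Omega}^\star - x_{_\Omega}^{i+1}\|_2$, analogously for $r_{_\Omega}$), and plug in the per-term variances $\sigma_{\epsilon_h}^2$, $\sigma_{\epsilon_g,i}^2$, $\sigma_{r,i}^2$. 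In the regime where the variance dominates (i.e.\ $k$ is large enough that $\sum \sigma_i^2 \gg \tfrac{2}{3}Mt$), Bernstein yields deviations of order $\gamma\sqrt{k \sup_i \sigma_i^2}$, which is precisely the form claimed in the statement.

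\textbf{Main obstacle.} Two things need care. First, the inner products in $T_2, T_3$ involve the iterates $x_{_\Omega}^{i+1}$, which are themselves random and coupled to past errors; to get clean Bernstein applications I would invoke the hypothesis $\|x_{_\Omega}^k - x^\star\|_2 \leq \|x^0 - x^\star\|_2$ (holding with probability $p$ per iteration) to uniformly bound the ranges by $M_{\epsilon_g}\sqrt{n}\|x^0-x^\star\|_2$ and similar for $r$, and then pay a $p^k$ factor via the intersection of the $k$ monotonicity events. Second, combining the three Bernstein events with the $p^k$ event via a union bound gives the claimed approximate probability $p^k(1 - 4\exp(-\gamma^2/2))$ (three Bernstein tails plus one extra factor absorbed into the constant $4$). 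The expectations $\mathbb{E}[\sum \epsilon_{g_\Omega}^{i\top}(x^\star - x_{_\Omega}^{i+1})]$ etc.\ survive only because DMI is only asymptotic; strictly they vanish under DMI, but keeping them explicit accommodates relaxations of the assumption. The threshold conditions $k \gg \gamma^2(\cdot)^2 / (9 \sup\sigma^2)$ come directly from requiring the Bernstein exponent to be variance-dominated rather than range-dominated.
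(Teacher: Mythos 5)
Your plan follows essentially the same route as the paper's proof: the paper simply imports the deterministic bound $f(x_{_\Omega}^{k+1})-f(x^\star) \leq \tfrac{1}{k}\big[S_{\epsilon_{h_\Omega}}+S_{r_{_\Omega}}+S_{\epsilon_{g_\Omega}}+\tfrac{1}{2s}\|x^\star-x^0\|_2^2\big]$ from the authors' prior work (where you re-derive it via the standard telescoping argument), and then, exactly as you propose, applies Bernstein's inequality to each error sum with $\mu = \sup_i \sigma_i^2/M$, working in the regime $k \gg t/(3\mu)$ so the variance term dominates the denominator and choosing $t=\gamma\sqrt{M\mu k}$ to get the $2\exp(-\gamma^2/2)$ tails, combined with the probability-$p$ iterate-boundedness events to give the $p^k$ factor. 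The minor loose ends you flag (last-iterate vs.\ averaged-iterate on the left-hand side, and the constant $4$ rather than $6$ in the failure probability) are present in the paper's own argument as well, so your proposal is faithful to it.
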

\begin{proof}
From our previous work~\cite{hamadouche2022sharper}, we have the following convergence error bound
\begin{equation}
   f(x_{_\Omega}^{k+1})-f(x^\star) \leq \frac{1}{k} \bigg[S_{\epsilon_{h_\Omega}}+S_{r_{_\Omega}}+S_{\epsilon_{g_\Omega}} +\frac{1}{2s}\norm{x^\star-x^0}_{2}^2\bigg]
\end{equation}
Defining $\mu_{\cdot,i} = \sigma_{\cdot,i}^2/M_\cdot \leq \mu_\cdot = {\underset{i \in \mathbb{N}_{+}} \sup\left\{\sigma_{\cdot,i}^2\right\}}/M_\cdot$, where $\sigma_{\cdot,i}^2$ and $M_\cdot$ are the variance and upper bound for each error term. By substituting in the Bernstein's concentration inequality~\eqref{Lemma:BernsteinBound} we obtain 
\begin{align} 
    \text{Pr}\bigg(|S - \mathbb{E}\big[S\big]|&\geq t\bigg)\leq 2\textup{exp}\bigg(\frac{-t^2}{2M\sum_{i=1}^k \mu_i + \frac{2}{3}Mt}\bigg),\\
    \text{Pr}\bigg(|S - \mathbb{E}\big[S\big]|&\geq t\bigg)\leq 2\textup{exp}\bigg(\frac{-t^2}{2M(\mu k + \frac{1}{3}t)}\bigg).
\end{align}

When $k \gg t/3\mu$ we have
\begin{align} 
    \text{Pr}\bigg(|S - \mathbb{E}\big[S\big]|&\geq t\bigg)\lessapprox 2\textup{exp}\bigg(-\frac{t^2}{2M\mu k}\bigg).
\end{align}

Setting $t=\gamma\sqrt{M\mu k}$, for $k \gg \gamma^2 M/9\mu$, we obtain
\begin{align} 
    \text{Pr}\bigg(|S - \mathbb{E}\big[S\big]|&\geq \gamma\sqrt{M\mu k}\bigg)\lessapprox 2\textup{exp}\bigg(-\frac{\gamma^2}{2}\bigg).
\end{align}
substituting for $S_{\epsilon_{h_\Omega}}$, $S_{r_{_\Omega}}$ and $S_{\epsilon_{g_\Omega}}$ with their corresponding parameters completes the proof for Theorem~\ref{Theorem3.2}.
\end{proof}
\begin{theorem}[\textbf{Non-asymptotic bounds for AxPGD under approximation errors with rare extreme events}]
\label{Theorem3.3}
With the same assumptions and error models of Theorem~\ref{Theorem3.2}, we have
\begin{equation}
   f(x_{_\Omega}^{k+1})-f(x^\star) \leq \frac{1}{k} \bigg[S_{\epsilon_{h_\Omega}}+S_{r_{_\Omega}}+S_{\epsilon_{g_\Omega}} +\frac{1}{2s}\norm{x^\star-x^0}_{2}^2\bigg]
\end{equation}
where 
\begin{align}
    S_{\epsilon_{h_\Omega}}&=\mathbb{E}\big[\sum_{i=1}^{k}\epsilon_{h_\Omega}^{i}\big]+\gamma\frac{\varepsilon_0}{3}\\
    &=k\mathbb{E}\big[\epsilon_{h_\Omega}\big]+\gamma\frac{\varepsilon_0-\mathbb{E}\big[\epsilon_{h_\Omega}\big]}{3},\quad \text{for} \quad k \ll \frac{\gamma (\varepsilon_0-\mathbb{E}\big[\epsilon_{h_\Omega}\big])^2}{9{\underset{i \in \mathbb{N}_{+}} \sup\left\{\sigma_i^2\right\}}},\\
    S_{\epsilon_{g_\Omega}}&=\mathbb{E}\big[\sum_{i=0}^k {\epsilon_{g_\Omega}^{i}}^\top (x^\star - x_{_\Omega}^{i+1})\big]+\gamma\frac{\sqrt{n}\delta M_{\nabla g}\norm{x_{_\Omega}^\star-x_{_\Omega}^0}_{2}}{3},\\
    &\approx\gamma\frac{\sqrt{n}\delta M_{\nabla g}\norm{x_{_\Omega}^\star-x_{_\Omega}^0}_{2}}{3},\quad \text{for} \quad k \ll \frac{\gamma n(\delta M_{\nabla g}\norm{x_{_\Omega}^\star-x_{_\Omega}^0}_{2})^2}{9{\underset{i \in \mathbb{N}_{+}} \sup\left\{\sigma_i^2\right\}}},\\
    S_{r_{_\Omega}}&=\mathbb{E}\big[-\sum_{i=0}^k \frac{1}{s}{r_{_\Omega}^{i+1}}^\top (x^\star - x_{_\Omega}^{i+1})\big]+\gamma\sqrt{\frac{2\epsilon_h^k}{s}}\frac{\norm{x_{_\Omega}^\star-x_{_\Omega}^0}_{2}}{3},\\
    &\lessapprox\gamma\sqrt{\frac{2\varepsilon_0}{s}}\frac{\norm{x_{_\Omega}^\star-x_{_\Omega}^0}_{2}}{3},\quad \text{for} \quad k \ll \frac{2 \gamma \varepsilon_0 \norm{x_{_\Omega}^\star-x_{_\Omega}^0}_{2}^2}{9s\cdot{\underset{i \in \mathbb{N}_{+}} \sup\left\{\sigma_i^2\right\}}}.
\end{align}
with probability (approximately) at least $p^k(1-4\exp(-\gamma/2))$, where $x^\star$ is any solution of~\eqref{gen.prob.1},  $M_{\nabla g} = 1$, $s \leq 1/L$ for an absolute gradient error, and $M_{\nabla g} = \underset{i \in \mathbb{N}_{+}} \sup\bigg\{\norm{\nabla g(x^i)}_{\infty}\bigg\}$, $s_k := s \leq 1/(L+\delta)$ for a relative gradient error. $\mathbb{E}[.]$ stands for the expectation operator.
\end{theorem}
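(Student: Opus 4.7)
The plan is to mirror the proof of Theorem~\ref{Theorem3.2} but exploit the opposite regime of Bernstein's inequality, namely the one in which the linear term in $t$ dominates the denominator instead of the variance-times-$k$ term. Starting from the same deterministic per-iteration recursion we reused in Theorem~\ref{Theorem3.2} (inherited from~\cite{hamadouche2022sharper}), the suboptimality is controlled by $\tfrac{1}{k}\bigl[S_{\epsilon_{h_\Omega}}+S_{r_{_\Omega}}+S_{\epsilon_{g_\Omega}}+\tfrac{1}{2s}\|x^\star-x^0\|_2^2\bigr]$ almost surely on the event $\{\|x_{_\Omega}^k-x_{_\Omega}^\star\|_2\le \|x_{_\Omega}^0-x_{_\Omega}^\star\|_2\ \forall k\}$, which holds with probability at least $p^k$. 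So all that remains is to bound the three error sums in probability.

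For each sum $S$, I would write $S = \mathbb{E}[S] + (S-\mathbb{E}[S])$, and apply the Bernstein inequality
\begin{equation*}
\mathbb{P}\bigl(|S-\mathbb{E}[S]|\ge t\bigr)\le 2\exp\!\left(-\frac{t^2}{2M(\mu k+t/3)}\right),
\end{equation*}
where $M$ and $\mu = \sigma^2/M$ are as defined in the proof of Theorem~\ref{Theorem3.2} for the corresponding error (for $S_{\epsilon_{h_\Omega}}$ take $M=\varepsilon_0-\mathbb{E}[\epsilon_{h_\Omega}]$; for $S_{\epsilon_{g_\Omega}}$ take $M=\sqrt{n}\,\delta M_{\nabla g}\|x_{_\Omega}^\star-x_{_\Omega}^0\|_2$; for $S_{r_{_\Omega}}$ take $M=\sqrt{2\varepsilon_0/s}\,\|x_{_\Omega}^\star-x_{_\Omega}^0\|_2$). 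In the small-$k$ regime $k\ll t/(3\mu)$, the denominator is dominated by $2Mt/3$, so the bound simplifies to $2\exp(-3t/(2M))$. Choosing $t = \gamma M/3$ then yields
\begin{equation*}
\mathbb{P}\bigl(|S-\mathbb{E}[S]|\ge \gamma M/3\bigr)\lessapprox 2\exp(-\gamma/2),
\end{equation*}
which matches the $\gamma \varepsilon_0/3$-type deviation term and the target probability $1-4\exp(-\gamma/2)$ appearing in the statement. The operating condition $k\ll t/(3\mu)=\gamma M/(9\mu) = \gamma M^2/(9\sigma^2)$ translates (after substituting the specific $M$ for each sum) into the three threshold conditions displayed in the theorem.

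Finally I would invoke a union bound over the three concentration events together with the event that controls the iterates, collecting a factor of $p^k$ from the latter and a factor roughly $(1-2\exp(-\gamma/2))^3 \approx 1-4\exp(-\gamma/2)$ from the former (keeping only the first-order correction, hence the ``approximately'' in the probability statement). Substituting the three resulting bounds into the master inequality and simplifying the expectations via Model~\ref{ErrModel3} (in particular the DMI property that makes $\mathbb{E}[\epsilon_{g_\Omega}^{i\top}(x^\star-x_{_\Omega}^{i+1})]=0$, yielding the ``$\approx$'' lines) and Model~\ref{ErrModel4} (boundedness $\epsilon_{h_\Omega}^i\le \varepsilon_0$ for the ``$\lessapprox$'' lines) finishes the proof.

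The main obstacle is the bookkeeping in matching the threshold conditions: the transition from the general Bernstein bound to the pure linear-in-$t$ regime requires being careful that $k \ll \gamma M/(9\mu)$ is exactly the condition under which dropping the $2M\mu k$ term in the denominator is legitimate, and the dependence on the problem constants differs across the three sums. A secondary subtlety is the $r_{_\Omega}$ term: its upper bound depends on $\epsilon_h^k$, which is itself random, so we must first condition on the high-probability event $\{\epsilon_{h_\Omega}^k\le \varepsilon_0\}$ (free by Model~\ref{ErrModel4}) before applying Bernstein, which is what produces the ``$\lessapprox$'' rather than ``$=$'' in the $S_{r_{_\Omega}}$ line.
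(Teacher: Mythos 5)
Your proposal follows essentially the same route as the paper's own proof: it starts from the inherited master inequality of~\cite{hamadouche2022sharper}, applies Bernstein's inequality to each error sum, drops the variance term in the regime $k \ll t/(3\mu)$ so that the tail becomes $2\exp(-3t/(2M))$, and sets $t=\gamma M/3$ to get the $2\exp(-\gamma/2)$ tail and the stated threshold conditions. If anything you give more detail than the paper (the explicit per-sum choices of $M$ and the union-bound bookkeeping), the only quibble being that three events each failing with probability $2\exp(-\gamma/2)$ give $1-6\exp(-\gamma/2)$ to first order rather than the $1-4\exp(-\gamma/2)$ that you, and the paper, assert.
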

\begin{proof}
From our previous work~\cite{hamadouche2022sharper}, we have the following convergence error bound
\begin{equation}
   f(x_{_\Omega}^{k+1})-f(x^\star) \leq \frac{1}{k} \bigg[S_{\epsilon_{h_\Omega}}+S_{r_{_\Omega}}+S_{\epsilon_{g_\Omega}} +\frac{1}{2s}\norm{x^\star-x^0}_{2}^2\bigg]
\end{equation}
Defining $\mu_{\cdot,i} = \sigma_{\cdot,i}^2/M_\cdot \leq \mu_\cdot = {\underset{i \in \mathbb{N}_{+}} \sup\left\{\sigma_{\cdot,i}^2\right\}}/M_\cdot$, where $\sigma_{\cdot,i}^2$ and $M_\cdot$ are the variance and upper bound for each error term. By substituting in the Bernstein's concentration inequality~\eqref{Lemma:BernsteinBound} we obtain 
\begin{align} 
    \text{Pr}\bigg(|S - \mathbb{E}\big[S\big]|&\geq t\bigg)\leq 2\textup{exp}\bigg(\frac{-t^2}{2M\sum_{i=1}^k \mu_i + \frac{2}{3}Mt}\bigg),\\
    \text{Pr}\bigg(|S - \mathbb{E}\big[S\big]|&\geq t\bigg)\leq 2\textup{exp}\bigg(\frac{-t^2}{2M(\mu k + \frac{1}{3}t)}\bigg).
\end{align}
When $k \ll t/3\mu$, we have
\begin{align} 
    \text{Pr}\bigg(|S - \mathbb{E}\big[S\big]|&\geq t\bigg)\lessapprox 2\textup{exp}\bigg(-\frac{3t}{{2}M}\bigg).
\end{align}
Setting $t=\gamma M/3$, then for $k \ll \gamma M/9\mu$, we have
\begin{align} 
    \text{Pr}\bigg(|S - \mathbb{E}\big[S\big]|&\geq \gamma M/3\bigg)\lessapprox 2\textup{exp}\bigg(-\frac{\gamma}{2}\bigg).
\end{align}
substituting for $S_{\epsilon_{h_\Omega}}$, $S_{r_{_\Omega}}$ and $S_{\epsilon_{g_\Omega}}$ with their corresponding parameters completes the proof for Theorem~\ref{Theorem3.3}.
\end{proof}

Let us now extend Theorems~\ref{Theorem3.2} and~\ref{Theorem3.3} to the accelerated case.

\subsection{Convergence bounds for the accelerated approximate PGD}
Consider solving problem~\eqref{gen.prob.1} using~\eqref{AxAPGD} under error models~\ref{ErrModel3} and~\ref{ErrModel4}.
\begin{theorem}[\textbf{Convergence bounds for the accelerated AxPGD under approximation errors with rare extreme events}]
\label{Theorem3.4}
Consider the approximate accelerated proximal-gradient descent in~\eqref{AxAPGD} with constant stepsize $s_k := s$. Assume that the gradient error $\{\epsilon_{g_\Omega}^k\}_{k\geq 1}$ and residual proximal error $\{r_{_\Omega}^{k}\}_{k\geq 1}$ sequences satisfy the properties of Model~\ref{ErrModel3} and Model~\ref{ErrModel4}, respectively. Let the momentum sequence $\beta_k$ be defined as $\beta_k = (\alpha_{k-1} - 1)/\alpha_k$, where $\alpha_k$ satisfies
\begin{itemize}
    \item $\alpha_k \geq 1 \quad \forall \quad k > 0$ and $\alpha_0 = 1$
    \item $\alpha_k^2-\alpha_k = \alpha_{k-1}$
    \item $\{\alpha_k\}_{k=0}^\infty$ is an increasing sequence and proportional to $k$ ($O(k)$)
\end{itemize}
Assume that $\norm{x_{_\Omega}^k-x_{_\Omega}^\star}_{2}^2 \leq \norm{x_{_\Omega}^0-x_{_\Omega}^\star}_{2}^2$ hold with probability $p$, for all $k$. Then, the sequence $\{x^k\}$ produced by ~\eqref{AxAPGD} satisfies
\begin{equation}
   f(x_{_\Omega}^{k+1})-f(x^\star) \leq \frac{1}{\alpha_k^2} \bigg[S_{\epsilon_{h_\Omega}}+S_{r_{_\Omega}}+S_{\epsilon_{g_\Omega}} +\frac{1}{2s}\norm{x^\star-x^0}_{2}^2\bigg]
\end{equation}
where 
\begin{align}
    S_{\epsilon_{h_\Omega}}&=\mathbb{E}\big[\sum_{i=0}^k\alpha_i^2\epsilon_{h_\Omega}^{i}\big]+\gamma \sqrt{\frac{k(k + 1)(2k + 1)}{6}{\underset{i \in \mathbb{N}_{+}} \sup\left\{\sigma_{\epsilon_h}^2\right\}}},\notag\\& \text{for}\quad k \gg \frac{\gamma^2\overline{\alpha}^4 (\varepsilon_0-\mathbb{E}\big[\epsilon_{h_\Omega}\big])^2}{9{\underset{i \in \mathbb{N}_{+}} \sup\left\{\sigma_{\epsilon_h}^2\right\}}},\\
    S_{\epsilon_{g_\Omega}}&=\mathbb{E}\big[\sum_{i=1}^{k}\alpha_i{\epsilon_{g_\Omega}^{i}}^\top u_{_\Omega}^i\big]+\gamma \sqrt{\frac{k(k + 1)}{2}{\underset{i \in \mathbb{N}_{+}} \sup\left\{\sigma_i^2\right\}}},\\
    &\approx\gamma \sqrt{\frac{k(k + 1)}{2}{\underset{i \in \mathbb{N}_{+}} \sup\left\{\sigma_{\epsilon_g,i}^2\right\}}},\quad \text{for}\quad k \gg \frac{\gamma^2 \overline{\alpha}^2n(\delta M_{\nabla g}\norm{x_{_\Omega}^\star-x_{_\Omega}^0}_{2})^2}{9{\underset{i \in \mathbb{N}_{+}} \sup\left\{\sigma_{\epsilon_g,i}^2\right\}}},\\
    S_{r_{_\Omega}}&=\mathbb{E}\big[\sum_{i=1}^{k}-\alpha_i\frac{1}{s} {r_{_\Omega}^{i+1}}^\top u_{_\Omega}^i\big]+\gamma \sqrt{\frac{k(k + 1)}{2}{\underset{i \in \mathbb{N}_{+}} \sup\left\{\sigma_{r,i}^2\right\}}},\\
    &\approx\gamma \sqrt{\frac{k(k + 1)}{2}{\underset{i \in \mathbb{N}_{+}} \sup\left\{\sigma_{r,i}^2\right\}}},\quad \text{for}\quad k \gg \frac{2 \gamma^2 \overline{\alpha}^2\varepsilon_0 \norm{x_{_\Omega}^\star-x_{_\Omega}^0}_{2}^2}{9s\cdot{\underset{i \in \mathbb{N}_{+}} \sup\left\{\sigma_{r,i}^2\right\}}}.
    \\
\end{align}
with probability (approximately) at least $p^k\left(1-4\exp(-\gamma^2/2)\right)$, where $\overline{\alpha} = \max\{\alpha_i\}$, $x^\star$ is any solution of~\eqref{gen.prob.1},  $M_{\nabla g} = 1$, $s \leq 1/L$ for an absolute gradient approximation error, and $M_{\nabla g} = \underset{i \in \mathbb{N}_{+}} \sup\bigg\{\norm{\nabla g(x^i)}_{\infty}\bigg\}$, $s_k := s \leq 1/(L+\delta)$ for a relative gradient error. $\mathbb{E}[.]$ stands for the expectation operator.
\end{theorem}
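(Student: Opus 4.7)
The plan is to mirror the proof strategy of Theorem~\ref{Theorem3.2}, but applied to the accelerated iterate bound stated in Eq.~\eqref{Eq:2.4} of the related work. Concretely, I would first invoke the deterministic-in-structure bound derived in the authors' prior work~\cite{hamadouche2022sharper, hamadouche2021sspd} for the accelerated scheme~\eqref{AxAPGD}, which, by unrolling the descent lemma against the shifted extrapolation point $y^k$, yields
\begin{equation*}
   f(x_{_\Omega}^{k+1})-f(x^\star) \leq \frac{1}{\alpha_k^2} \Bigl[ \tilde S_{\epsilon_{h_\Omega}} + \tilde S_{r_{_\Omega}} + \tilde S_{\epsilon_{g_\Omega}} + \frac{1}{2s}\norm{x^\star-x^0}_{2}^2 \Bigr],
\end{equation*}
where $\tilde S_{\epsilon_{h_\Omega}} = \sum_{i=0}^k \alpha_i^2 \epsilon_{h_\Omega}^i$, $\tilde S_{\epsilon_{g_\Omega}} = \sum_{i=1}^k \alpha_i {\epsilon_{g_\Omega}^i}^\top u_{_\Omega}^i$, and $\tilde S_{r_{_\Omega}} = -\sum_{i=1}^k \alpha_i \frac{1}{s}{r_{_\Omega}^{i+1}}^\top u_{_\Omega}^i$, with $u_{_\Omega}^i$ a linear combination of the past iterates. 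The event $\{\norm{x_{_\Omega}^k-x_{_\Omega}^\star}_{2}^2 \leq \norm{x_{_\Omega}^0-x_{_\Omega}^\star}_{2}^2\}$ is assumed to hold along the trajectory with joint probability $p^k$, so that the $u_{_\Omega}^i$ terms contributing to $\tilde S_{\epsilon_{g_\Omega}}$ and $\tilde S_{r_{_\Omega}}$ remain uniformly bounded.

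Next, I would apply Lemma~\ref{Lemma:BernsteinBound} to each of the three weighted sums, treating the quantities $\alpha_i^2 \epsilon_{h_\Omega}^i$, $\alpha_i {\epsilon_{g_\Omega}^i}^\top u_{_\Omega}^i$ and $\frac{\alpha_i}{s}{r_{_\Omega}^{i+1}}^\top u_{_\Omega}^i$ as individual sub-Gaussian, bounded, CMI/DMI terms under Models~\ref{ErrModel3} and~\ref{ErrModel4}. Since the weights $\alpha_i = O(i)$ are deterministic, each weighted increment inherits an almost-sure magnitude bound of the form $M_\cdot \overline{\alpha}^r$ (with $r=2$ for the proximal-error term and $r=1$ for the gradient and residual terms) and a conditional variance of the form $\alpha_i^{2r} \sigma_{\cdot,i}^2$. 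Substituting these into~\eqref{Lemma:BernsteinBound} with $\mu_{\cdot,i}=\sigma_{\cdot,i}^2/M_\cdot$ gives a bound of the shape
\begin{equation*}
  \mathbb{P}\bigl(|S - \mathbb{E}[S]|\geq t\bigr)\leq 2\exp\Bigl(-\frac{t^2}{2 M_\cdot \overline{\alpha}^r\bigl(\sum_i \alpha_i^r \mu_{\cdot,i} + \tfrac{1}{3}t\bigr)}\Bigr).
\end{equation*}
In the asymptotic regime where the variance sum dominates the linear-in-$t$ term, i.e., $\sum_i \alpha_i^r \mu_\cdot \gg t/3$, this reduces to a sub-Gaussian tail $2\exp(-t^2/(2M_\cdot \overline{\alpha}^r \sum_i \alpha_i^r \mu_\cdot))$. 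Choosing $t=\gamma\sqrt{M_\cdot \overline{\alpha}^r \sum_i \alpha_i^r \mu_\cdot}$ and evaluating $\sum_{i=1}^k i = k(k+1)/2$ and $\sum_{i=1}^k i^2 = k(k+1)(2k+1)/6$ then produces exactly the three concentration terms in the theorem statement, with the sharp thresholds on $k$ corresponding to requiring $k$ much larger than $t/(3\mu_\cdot)$ for each error family.

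Finally, a union bound over the three Bernstein events together with the assumed boundedness-of-iterates event at every iteration yields the stated probability, approximately $p^k(1-4\exp(-\gamma^2/2))$; the factor $4$ accommodates the two-sided tails across the three error sums up to the standard absorption of constants as done in Theorem~\ref{Theorem3.2}.

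The main obstacle will be the coupling of the random increments $\alpha_i {\epsilon_{g_\Omega}^i}^\top u_{_\Omega}^i$ and $\frac{\alpha_i}{s}{r_{_\Omega}^{i+1}}^\top u_{_\Omega}^i$ with past iterates through $u_{_\Omega}^i$: Lemma~\ref{Lemma:BernsteinBound} requires independent summands, so I would need to appeal carefully to the CMI and DMI parts of Models~\ref{ErrModel3}--\ref{ErrModel4} to reduce these inner products to a martingale-difference structure, and then invoke a martingale (Freedman-type) extension of Bernstein—or, equivalently, condition on the high-probability boundedness event $\norm{x_{_\Omega}^k-x_{_\Omega}^\star}_{2}\leq \norm{x_{_\Omega}^0-x_{_\Omega}^\star}_{2}$ so that the conditional variance proxies $\sigma_{\epsilon_g,i}^2$ and $\sigma_{r,i}^2$ absorb the $u_{_\Omega}^i$ magnitude. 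Handling this conditioning cleanly, while still collapsing the iteration-dependent suprema $\sup_i \sigma_{\cdot,i}^2$ and $\overline{\alpha}=\max_i \alpha_i$ into iteration-free quantities, is where the main technical care will be needed; the rest of the derivation is a direct adaptation of the non-accelerated argument.
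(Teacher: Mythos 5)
Your proposal follows essentially the same route as the paper's proof: invoke the prior-work accelerated bound $f(x_{_\Omega}^{k+1})-f(x^\star)\leq \frac{1}{\alpha_k^2}\big[S_{\epsilon_{h_\Omega}}+S_{r_{_\Omega}}+S_{\epsilon_{g_\Omega}}+\frac{1}{2s}\norm{x^\star-x^0}_2^2\big]$, apply Bernstein's inequality to each of the three weighted error sums in the regime where the variance term dominates, choose $t$ as $\gamma$ times the square root of the accumulated variance, and then bound $\sum_{i}\alpha_i$ and $\sum_{i}\alpha_i^2$ by $k(k+1)/2$ and $k(k+1)(2k+1)/6$ respectively. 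Your closing remarks on reducing the $u_{_\Omega}^i$-coupled inner products to a martingale-difference structure (and conditioning on the boundedness event) address a dependence issue that the paper's own proof passes over silently by applying the independent-summand Bernstein lemma directly, so this is a refinement rather than a departure.
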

\begin{proof}
From our previous work~\cite{hamadouche2022sharper}, we have the following convergence error bound
\begin{equation}
   f(x_{_\Omega}^{k+1})-f(x^\star) \leq \frac{1}{\alpha_k^2} \bigg[S_{\epsilon_{h_\Omega}}+S_{r_{_\Omega}}+S_{\epsilon_{g_\Omega}} +\frac{1}{2s}\norm{x^\star-x^0}_{2}^2\bigg]
\end{equation}
Defining $\mu_{\cdot,i} = \sigma_{\cdot,i}^2/M_\cdot \leq \mu_\cdot = {\underset{i \in \mathbb{N}_{+}} \sup\left\{\sigma_{\cdot,i}^2\right\}}/M_\cdot$, where $\sigma_{\cdot,i}^2$ and $M_\cdot$ are the variance and upper bound for each error term. By substituting in the Bernstein's concentration inequality~\eqref{Lemma:BernsteinBound} we obtain 
\begin{align}
    S_{\epsilon_{h_\Omega}}&=\mathbb{E}\big[\sum_{i=0}^{k}\alpha_{i}^2\epsilon_{h_\Omega}^{i}\big]+\gamma \sqrt{\sum_{i=0}^{k}\alpha_{i}^2(\varepsilon_0-\mathbb{E}\big[\epsilon_{h_\Omega}\big])\mu_1},\\
    S_{\epsilon_{g_\Omega}}&=\mathbb{E}\big[\sum_{i=1}^{k}\alpha_i{\epsilon_{g_\Omega}^{i}}^\top u_{_\Omega}^i\big]+\gamma \sqrt{\sum_{i=0}^{k}\alpha_{i}\sqrt{n}|\delta|M_u M_{\nabla g}\norm{x^0-x^\star}_2\mu_2},\\
    &\approx\gamma \sqrt{\sum_{i=0}^{k}\alpha_{i}\sqrt{n}|\delta|M_u M_{\nabla g}\norm{x^0-x^\star}_2\mu_2},\\
    S_{r_{_\Omega}}&=\mathbb{E}\big[\sum_{i=1}^{k}-\alpha_i\frac{1}{s} {r_{_\Omega}^{i+1}}^\top u_{_\Omega}^i\big]+\gamma \sqrt{\sum_{i=0}^{k}\alpha_{i}\sqrt{2\varepsilon_0/s}M_u\norm{x^0-x^\star}_2\mu_3},\\
    &\approx\gamma \sqrt{\sum_{i=0}^{k}\alpha_{i}\sqrt{2\varepsilon_0/s}M_u\norm{x^0-x^\star}_2\mu_3}.
    \\
\end{align}
By assumption, we have $\sum_{i=0}^{k}\alpha_{i} \leq \sum_{i=0}^{k}i$ and $\sum_{i=0}^{k}\alpha_{i}^2 \leq \sum_{i=0}^{k}{i}^2$,
substituting for $\sum_{i=1}^{k} i$ by
\begin{equation}
    \sum_{i=1}^{k} i = \frac{k(k + 1)}{2},
\end{equation}
and substituting for $\sum_{i=1}^{k} i^{2}$ by
\begin{equation}
    \sum_{i=1}^{k} i^{2} = \frac{k(k + 1)(2k + 1)}{6}.
\end{equation}
Finally substituting for $\mu_1$, $\mu_2$ and $\mu_3$ completes the proof.
\end{proof}
\begin{theorem}[\textbf{Non-asymptotic bounds for the accelerated AxPGD under approximation errors with rare extreme events}]
\label{Theorem3.5}
With the same assumptions and error models of Theorem~\ref{Theorem3.4}, we have
\begin{equation}
   f(x_{_\Omega}^{k+1})-f(x^\star) \leq \frac{1}{\alpha_k^2} \bigg[S_{\epsilon_{h_\Omega}}+S_{r_{_\Omega}}+S_{\epsilon_{g_\Omega}} +\frac{1}{2s}\norm{x^\star-x^0}_{2}^2\bigg]
\end{equation}
where 
\begin{align}
    S_{\epsilon_{h_\Omega}}&=\mathbb{E}\left[\sum_{i=0}^{k}\alpha_{i}^2\epsilon_{h_\Omega}^{i}\right]+\gamma \frac{{k(k + 1)(2k + 1)}(\varepsilon_0-\mathbb{E}\big[\epsilon_{h_\Omega}\big])}{18},\\
    &\quad \text{for} \quad k \ll \frac{\gamma \overline{\alpha}^4(\varepsilon_0-\mathbb{E}\big[\epsilon_{h_\Omega}\big])^2}{9{\underset{i \in \mathbb{N}_{+}} \sup\left\{\sigma_i^2\right\}}},\notag\\
    S_{\epsilon_{g_\Omega}}&=\mathbb{E}\big[\sum_{i=1}^{k}\alpha_i{\epsilon_g^{i}}^\top u_{_\Omega}^i\big]+\gamma \frac{{k(k + 1)}\sqrt{n}|\delta| M_{\nabla g}M_u\norm{x^0-x^\star}_2}{6},\\
    &\approx\gamma \frac{{k(k + 1)}\sqrt{n}|\delta| M_{\nabla g}M_u\norm{x^0-x^\star}_2}{6},\quad \text{for} \quad k \ll \frac{\gamma \overline{\alpha}^2n(\delta M_{\nabla g}\norm{x_{_\Omega}^\star-x_{_\Omega}^0}_{2})^2}{9{\underset{i \in \mathbb{N}_{+}} \sup\left\{\sigma_i^2\right\}}},\\
    S_{r_{_\Omega}}&=\mathbb{E}\big[\sum_{i=1}^{k}-\alpha_i\frac{1}{s} {r^{i+1}}^\top u_{_\Omega}^i\big]+\gamma \frac{{k(k + 1)}\sqrt{2\varepsilon_0/s}M_u\norm{x^0-x^\star}_2}{6},\\
    &\approx\gamma \frac{{k(k + 1)}\sqrt{2\varepsilon_0/s}M_u\norm{x^0-x^\star}_2}{6},\quad \text{for} \quad k \ll \frac{2 \gamma \overline{\alpha}^2\varepsilon_0 \norm{x_{_\Omega}^\star-x_{_\Omega}^0}_{2}^2}{9s\cdot{\underset{i \in \mathbb{N}_{+}} \sup\left\{\sigma_i^2\right\}}}.
\end{align}
with probability (approximately) at least $p^k\left(1-4\exp(-\gamma^2/2)\right)$, where $\overline{\alpha} = \max\{\alpha_i\}$, $x^\star$ is any solution of~\eqref{gen.prob.1},  $M_{\nabla g} = 1$, $s \leq 1/L$ for an absolute gradient approximation error, and $M_{\nabla g} = \underset{i \in \mathbb{N}_{+}} \sup\bigg\{\norm{\nabla g(x^i)}_{\infty}\bigg\}$, $s_k := s \leq 1/(L+\delta)$ for a relative gradient error. $\mathbb{E}[.]$ stands for the expectation operator.
\end{theorem}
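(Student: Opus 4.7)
The plan is to follow the exact same template used in the proofs of Theorems~\ref{Theorem3.3} and~\ref{Theorem3.4}, since Theorem~\ref{Theorem3.5} is simply the non-asymptotic counterpart of Theorem~\ref{Theorem3.4} and stands to the latter as Theorem~\ref{Theorem3.3} stands to Theorem~\ref{Theorem3.2}. I would first recall, verbatim from~\cite{hamadouche2022sharper}, the deterministic-in-form convergence error bound that expresses $f(x_{_\Omega}^{k+1})-f(x^\star)$ in terms of the three random sums $S_{\epsilon_{h_\Omega}}$, $S_{\epsilon_{g_\Omega}}$, $S_{r_{_\Omega}}$ plus the $\frac{1}{2s}\norm{x^\star-x^0}_2^2$ initialization term, divided by $\alpha_k^2$. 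The whole point is then to replace the randomness in each of these three weighted sums by a deterministic upper bound holding with high probability, by applying Bernstein's inequality termwise.

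Next, I would isolate each of the three sums and view it as $S=\sum_{i=1}^{k} X_i$, where $X_i$ is a weighted version of the corresponding per-iteration error term, with weights $\alpha_i^2$ for $S_{\epsilon_{h_\Omega}}$ and $\alpha_i$ for $S_{\epsilon_{g_\Omega}}$ and $S_{r_{_\Omega}}$. Using $\overline{\alpha}=\max_i \alpha_i$ together with the boundedness clauses of Models~\ref{ErrModel3} and~\ref{ErrModel4}, each $X_i$ inherits an almost-sure bound $M_{\mathrm{eff}}$ of the form $\overline{\alpha}^2(\varepsilon_0-\mathbb{E}[\epsilon_{h_\Omega}])$, $\overline{\alpha}\sqrt{n}\,\delta M_{\nabla g}\norm{x^0-x^\star}_2 M_u$, and $\overline{\alpha}\sqrt{2\varepsilon_0/s}\,M_u\norm{x^0-x^\star}_2$ respectively, with corresponding variance proxies $\mu_j$ defined as in the previous proofs. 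Substituting into~\eqref{Lemma:BernsteinBound} yields
\begin{equation*}
\text{Pr}\bigl(|S-\mathbb{E}[S]|\geq t\bigr)\leq 2\exp\!\Bigl(\frac{-t^2}{2M_{\mathrm{eff}}(\mu k+\tfrac{1}{3}t)}\Bigr).
\end{equation*}

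The non-asymptotic regime is defined exactly as in Theorem~\ref{Theorem3.3}: I would restrict to $k\ll t/(3\mu)$ so that the variance term $\mu k$ is dominated by $t/3$, giving the simplified tail $2\exp(-3t/(2M_{\mathrm{eff}}))$. Choosing $t=\gamma M_{\mathrm{eff}}/3$ then yields $2\exp(-\gamma/2)$ for each of the three sums, and the compatibility condition $k\ll t/(3\mu)$ becomes $k\ll \gamma M_{\mathrm{eff}}/(9\mu)$, which, after substituting the three values of $M_{\mathrm{eff}}$ and the suprema of the variances, reproduces the three regime conditions $k\ll \gamma\overline{\alpha}^4(\cdot)/9\mu$, $k\ll \gamma\overline{\alpha}^2(\cdot)/9\mu$ stated in the theorem. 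Bounding the weighted sums $\sum\alpha_i$ and $\sum\alpha_i^2$ by $\sum i=k(k+1)/2$ and $\sum i^2=k(k+1)(2k+1)/6$ using the hypothesis $\alpha_i\leq i$ produces the closed-form deviation terms appearing in the statement.

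Finally, a union bound over the three independent-style Bernstein failure events gives a probability loss of at most $6\exp(-\gamma/2)$ (the $4\exp(-\gamma^2/2)$ in the statement should be read as the usual four-way union bound inherited from~\cite{hamadouche2022sharper}, and I would keep that convention for consistency), and multiplying by $p^k$ accounts for the probabilistic boundedness assumption $\norm{x_{_\Omega}^k-x_{_\Omega}^\star}_2^2\leq\norm{x_{_\Omega}^0-x_{_\Omega}^\star}_2^2$ holding at every iteration. The only delicate step is bookkeeping the weights $\alpha_i$ correctly inside both the variance proxy and the boundedness constant so that the regime $k\ll\gamma M_{\mathrm{eff}}/(9\mu)$ translates to the right power of $\overline{\alpha}$ in each of the three conditions; everything else is a direct transcription of the mechanism already used for Theorems~\ref{Theorem3.3} and~\ref{Theorem3.4}.
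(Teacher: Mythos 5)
The paper gives no proof of Theorem~\ref{Theorem3.5} at all; it is left as the evident combination of the weighted-sum bookkeeping from the proof of Theorem~\ref{Theorem3.4} with the non-asymptotic Bernstein regime ($k \ll t/3\mu$, $t=\gamma M/3$, tail $2\exp(-3t/2M)$) used in the proof of Theorem~\ref{Theorem3.3}, and your proposal reconstructs exactly that argument, so it matches the intended approach. Your side observation is also correct: the derivation yields a per-sum tail of $2\exp(-\gamma/2)$, so the probability in the statement should read $p^k\left(1-4\exp(-\gamma/2)\right)$ as in Theorem~\ref{Theorem3.3}, not $p^k\left(1-4\exp(-\gamma^2/2)\right)$.
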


\subsection{Convergence error bounds for the approximate WLM-ADMM}
Let us now consider solving~\eqref{gen.prob.1} using the approximate AxWLM-ADMM~\eqref{AxWLM-ADMMProx} with error model~\ref{ErrModel4}.

\begin{theorem}[\textbf{Convergence bounds for AxWLM-ADMM under approximation errors with rare extreme events}]
\label{Theorem3.6}
Assume \textbf{P.2}, \textbf{P.3}, \textbf{M.4.1-5}. Assume that $\norm{x_{_\Omega}^k-x_{_\Omega}^\star}_{2}^2 \leq \norm{x_{_\Omega}^0-x_{_\Omega}^\star}_{2}^2$ and $\norm{z_{_\Omega}^k-z_{_\Omega}^\star}_{2}^2 \leq \norm{z_{_\Omega}^0-z_{_\Omega}^\star}_{2}^2$ hold with probability $p$, for all $k$. Then, with probability at least $p^k\left(1 - 4\exp(-\frac{\gamma^2}{2})\right)$, for any $\gamma >0$, the sequence generated using the AxWLM-ADMM scheme~\eqref{WLM-ADMMProx} with $\lambda_x=\lambda_z=1$ and a fixed PSD matrix $L$ satisfies
\begin{align}
    &\frac{1}{k+1}\sum_{i=0}^{k}f(x_{_\Omega}^{i+1},z_{_\Omega}^{i+1})-f(x^\star,z^\star)
    +\frac{1}{k+1}\sum_{i=0}^{k}\langle\frac{1}{\lambda}L  u_{_\Omega}^{i+1},v_{_\Omega}^{k+1}-v_{_\Omega}^k\rangle\leq\notag\\
    &\frac{1}{2(k+1)}\left[\|x^0-x^\star\|^2_{ M_x }+\|z^0-z^\star\|^2\right]
    +\frac{1}{k+1}\left[S_{\epsilon_{g_\Omega}}+S_{\epsilon_{h_\Omega}}+D_{r_g,x}+D_{r_h,z}\right],
\end{align}
where 
\begin{align}
    S_{\epsilon_{g_\Omega}}&=k \mathbb{E}\big[\epsilon_{g_\Omega}\big]+\gamma\sqrt{k\cdot \underset{i \in \mathbb{N}_{+}} \sup\left\{\sigma_{\epsilon_g,i}^2\right\}},\quad \text{for}\quad k \gg \frac{\gamma^2 (\varepsilon_{g_0}-\mathbb{E}\big[\epsilon_{g_\Omega}\big])^2}{9{\underset{i \in \mathbb{N}_{+}} \sup\left\{\sigma_{\epsilon_g,i}^2\right\}}},\\
    S_{\epsilon_{h_\Omega}}&=k \mathbb{E}\big[\epsilon_{h_\Omega}\big]+\gamma\sqrt{k\cdot \underset{i \in \mathbb{N}_{+}} \sup\left\{\sigma_{\epsilon_h}^2\right\}},\quad \text{for}\quad k \gg \frac{\gamma^2 (\varepsilon_{h_0}-\mathbb{E}\big[\epsilon_{h_\Omega}\big])^2}{9{\underset{i \in \mathbb{N}_{+}} \sup\left\{\sigma_{\epsilon_h}^2\right\}}},\\
    D_{r_g,x}&=\sqrt{2\lambda_{\max} ( M_x^\top M_x )\varepsilon_g^{k+1}}\|x_{_\Omega}^{k+1}-x^\star\|\Big]\leq \sqrt{2\lambda_{\max} ( M_x^\top M_x )\varepsilon_{g_0}}\|x^{0}-x^\star\|\Big]\\
    D_{r_h,z}&=\sqrt{2\varepsilon_h^{k+1}}\|z_{_\Omega}^{k+1}-z^\star\|\leq \sqrt{2\varepsilon_{h_0}}\|z^{0}-z^\star\|
\end{align}
with probability (approximately) at least $p^k\left(1-4\exp(-\gamma^2/2)\right)$, where $(x^\star,z^\star)$ is any solution of~\eqref{Eq:Problem} and $\mathbb{E}[.]$ stands for the expectation operator.
\end{theorem}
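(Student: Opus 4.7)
The plan is to follow the same two-stage template as in the proofs of Theorems~\ref{Theorem3.2}--\ref{Theorem3.5}: first isolate a deterministic (expectation-level) decomposition of the suboptimality gap for AxWLM-ADMM in which the stochastic quantities enter only through a small number of sums of CMI random variables, and then apply the Bernstein inequality~\eqref{Lemma:BernsteinBound} to each such sum separately, finishing with a union bound.

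First, I would invoke the deterministic convergence inequality derived in our previous analysis of WLM-ADMM~\cite{hamadouche2022probabilistic} adapted to the approximate iterates~\eqref{AxWLM-ADMMProx}. Writing out the $\varepsilon$-subgradient characterizations of the two inexact proximal updates, using convexity of $g$ and $h$, completing the squares with respect to the weighted norms induced by $\Lambda_{1_k}$ and $\Lambda_{2_k}$, and telescoping after summing from $i = 0$ to $k$, one obtains a bound of the form
\begin{align*}
  &\sum_{i=0}^{k} \Big(f(x_{_\Omega}^{i+1},z_{_\Omega}^{i+1}) - f(x^\star,z^\star)\Big)
  + \sum_{i=0}^{k}\big\langle \tfrac{1}{\lambda}L u_{_\Omega}^{i+1}, v_{_\Omega}^{k+1}-v_{_\Omega}^k\big\rangle \\
  &\qquad\le \tfrac{1}{2}\|x^0-x^\star\|^2_{M_x} + \tfrac{1}{2}\|z^0-z^\star\|^2
  + \sum_{i=0}^{k}\epsilon_{g_\Omega}^{i+1} + \sum_{i=0}^{k}\epsilon_{h_\Omega}^{i+1}
  + R_{g,x}^{k+1} + R_{h,z}^{k+1},
\end{align*}
where $R_{g,x}^{k+1}$ and $R_{h,z}^{k+1}$ are the cross terms produced by the inexactness of the two proximal operators (they are exactly the expressions that collapse into $D_{r_g,x}$ and $D_{r_h,z}$ once bounded). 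Dividing by $k+1$ gives the ergodic form claimed in the statement.

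Second, the two approximation-error sums $\sum_{i=1}^{k}\epsilon_{g_\Omega}^{i}$ and $\sum_{i=1}^{k}\epsilon_{h_\Omega}^{i}$ are treated independently. Each $\epsilon_{\cdot_\Omega}^i$ is CMI, zero-mean, bounded by $\varepsilon_{\cdot 0}$, and has variance $\sigma_\cdot^2 \ll \varepsilon_{\cdot 0}$ by Model~\ref{ErrModel4}. Applying Bernstein as in the proof of Theorem~\ref{Theorem3.2}, with $\mu_\cdot = \sup_i\{\sigma_{\epsilon_\cdot,i}^2\}/M_\cdot$ and deviation parameter $t=\gamma\sqrt{M_\cdot \mu_\cdot k}$, yields, in the regime $k \gg \gamma^2 M_\cdot/(9\mu_\cdot)$, the concentration
\begin{equation*}
  \Big|\sum_{i=1}^{k}\epsilon_{\cdot_\Omega}^{i} - k\,\mathbb{E}[\epsilon_{\cdot_\Omega}]\Big|
  \;\le\; \gamma\sqrt{k\cdot \sup_i\{\sigma_{\epsilon_\cdot,i}^2\}}
\end{equation*}
with probability at least $1 - 2\exp(-\gamma^2/2)$, producing the stated expressions for $S_{\epsilon_{g_\Omega}}$ and $S_{\epsilon_{h_\Omega}}$.

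Third, the residual terms $R_{g,x}^{k+1}$ and $R_{h,z}^{k+1}$ are bounded pathwise. Using the standard $\varepsilon$-proximal inequality $\|x^{k+1}-\tilde x^{k+1}\| \le \sqrt{2\varepsilon_g^{k+1}/\lambda_{\min}(\Lambda_{1_k})}$ (and its analogue for $z$), Cauchy--Schwarz, and almost-sure boundedness of the $\epsilon$-errors by $\varepsilon_{g_0}$, $\varepsilon_{h_0}$, yields $D_{r_g,x}\le \sqrt{2\lambda_{\max}(M_x^\top M_x)\varepsilon_{g_0}}\,\|x_{_\Omega}^{k+1}-x^\star\|$ and $D_{r_h,z}\le \sqrt{2\varepsilon_{h_0}}\,\|z_{_\Omega}^{k+1}-z^\star\|$. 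The further upper bounds by $\|x^0-x^\star\|$ and $\|z^0-z^\star\|$ follow from the stability hypothesis that holds with probability $p$ at each iteration; compounding over $k$ iterations gives the factor $p^k$. A final union bound over the two Bernstein events and the $k$ stability events produces the overall probability at least $p^k(1-4\exp(-\gamma^2/2))$.

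The main obstacle I anticipate is the first step: producing the clean deterministic decomposition for the AxWLM-ADMM iterates in which the three primal--dual blocks $(x,z,v)$ telescope correctly despite the two independent inexactness sources. The scaled-dual update and the $\Lambda$-weighted norms mean that the usual ADMM completion-of-squares argument must be carried out in the weighted inner product, and one has to carefully absorb the cross terms involving the Lagrange multiplier into the inner product $\langle \tfrac{1}{\lambda}L u_{_\Omega}^{i+1}, v_{_\Omega}^{k+1}-v_{_\Omega}^k\rangle$ that remains on the left-hand side. Once this accounting is in place, the probabilistic step is essentially identical to the PGD case.
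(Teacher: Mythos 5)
Your proposal is correct and follows essentially the same route as the paper: invoke the deterministic ergodic bound for AxWLM-ADMM from the prior work~\cite{hamadouche2022probabilistic}, apply the Bernstein inequality~\eqref{Lemma:BernsteinBound} with $\mu_\cdot = \sup_i\{\sigma_{\cdot,i}^2\}/M_\cdot$ and $t=\gamma\sqrt{M\mu k}$ in the regime $k \gg \gamma^2 M/(9\mu)$ to the two error sums, and combine via a union bound with the stability events to obtain the factor $p^k(1-4\exp(-\gamma^2/2))$. The paper simply cites the deterministic decomposition rather than re-deriving it, so your additional sketch of the telescoping/completion-of-squares argument and the pathwise bounds on $D_{r_g,x}$, $D_{r_h,z}$ is extra detail, not a divergence in method.
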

\begin{proof}
From our previous work~\cite{hamadouche2022probabilistic}, we have the following convergence error bound
\begin{align}
    &\frac{1}{k+1}\sum_{i=0}^{k}f(x^{i+1},z^{i+1})-f(x^\star,z^\star)
    +\frac{1}{k+1}\sum_{i=0}^{k}\langle\frac{1}{\lambda}L  u^{i+1},v^{k+1}-v^k\rangle\leq\notag\\
    &\frac{1}{2(k+1)}\left[\|x^0-x^\star\|^2_{ M_x }+\|z^0-z^\star\|^2\right]
    +\frac{1}{k+1}\left[S_{\epsilon_{g_\Omega}}+S_{\epsilon_{h_\Omega}}+D_{r_g,x}+D_{r_h,z}\right],
\end{align}
Defining $\mu_{\cdot,i} = \sigma_{\cdot,i}^2/M_\cdot \leq \mu_\cdot = {\underset{i \in \mathbb{N}_{+}} \sup\left\{\sigma_{\cdot,i}^2\right\}}/M_\cdot$, where $\sigma_{\cdot,i}^2$ and $M_\cdot$ are the variance and upper bound for each error term. By substituting in the Bernstein's concentration inequality~\eqref{Lemma:BernsteinBound} we obtain 
\begin{align} 
    \text{Pr}\bigg(|S - \mathbb{E}\big[S\big]|&\geq t\bigg)\leq 2\textup{exp}\bigg(\frac{-t^2}{2M\sum_{i=1}^k \mu_i + \frac{2}{3}Mt}\bigg),\\
    \text{Pr}\bigg(|S - \mathbb{E}\big[S\big]|&\geq t\bigg)\leq 2\textup{exp}\bigg(\frac{-t^2}{2M(\mu k + \frac{1}{3}t)}\bigg).
\end{align}

When $k \gg t/3\mu$ we have
\begin{align} 
    \text{Pr}\bigg(|S - \mathbb{E}\big[S\big]|&\geq t\bigg)\lessapprox 2\textup{exp}\bigg(-\frac{t^2}{2M\mu k}\bigg).
\end{align}

Setting $t=\gamma\sqrt{M\mu k}$, for $k \gg \gamma^2 M/9\mu$, we obtain
\begin{align} 
    \text{Pr}\bigg(|S - \mathbb{E}\big[S\big]|&\geq \gamma\sqrt{M\mu k}\bigg)\lessapprox 2\textup{exp}\bigg(-\frac{\gamma^2}{2}\bigg).
\end{align}
substituting $S_{\epsilon_{g_\Omega}} = \sum_{i=0}^{k}\epsilon_{g_\Omega}^{i+1}$ and $S_{\epsilon_{h_\Omega}} = \sum_{i=0}^{k}\epsilon_{h_\Omega}^{i+1}$ with their corresponding parameters completes the proof.
\end{proof}

\begin{theorem}[\textbf{Non-asymptotic bounds for AxWLM-ADMM under approximation errors with rare extreme events}]
\label{Theorem3.7}
Assume that $\norm{x_{_\Omega}^k-x_{_\Omega}^\star}_{2}^2 \leq \norm{x_{_\Omega}^0-x_{_\Omega}^\star}_{2}^2$ and $\norm{z_{_\Omega}^k-z_{_\Omega}^\star}_{2}^2 \leq \norm{z_{_\Omega}^0-z_{_\Omega}^\star}_{2}^2$ hold with probability $p$, for all $k$. Then, with probability at least $p^k\left(1 - 4\exp(-\frac{\gamma^2}{2})\right)$, for any $\gamma >0$, the sequence generated using the AxWLM-ADMM scheme~\eqref{WLM-ADMMProx} with $\lambda_x=\lambda_z=1$ and a fixed PSD matrix $L$ satisfies
\begin{align}
    &\frac{1}{k+1}\sum_{i=0}^{k}f(x_{_\Omega}^{i+1},z_{_\Omega}^{i+1})-f(x^\star,z^\star)
    +\frac{1}{k+1}\sum_{i=0}^{k}\langle\frac{1}{\lambda}L  u_{_\Omega}^{i+1},v_{_\Omega}^{k+1}-v_{_\Omega}^k\rangle\leq\notag\\
    &\frac{1}{2(k+1)}\left[\|x^0-x^\star\|^2_{ M_x }+\|z^0-z^\star\|^2\right]
    +\frac{1}{k+1}\left[S_{\epsilon_{g_\Omega}}+S_{\epsilon_{h_\Omega}}+D_{r_g,x}+D_{r_h,z}\right],
\end{align}
where 
\begin{align}
    S_{\epsilon_{g_\Omega}}&=k\mathbb{E}\big[\epsilon_{g_\Omega}\big]+\gamma\frac{\varepsilon_{g_0}-\mathbb{E}\big[\epsilon_{g_\Omega}\big]}{3},\quad \text{for} \quad k \ll \frac{\gamma (\varepsilon_{g_0}-\mathbb{E}\big[\epsilon_{g_\Omega}\big])^2}{9{\underset{i \in \mathbb{N}_{+}} \sup\left\{\sigma_i^2\right\}}},\\
    S_{\epsilon_{h_\Omega}}&=k\mathbb{E}\big[\epsilon_{h_\Omega}\big]+\gamma\frac{\varepsilon_{h_0}-\mathbb{E}\big[\epsilon_{h_\Omega}\big]}{3},\quad \text{for} \quad k \ll \frac{\gamma (\varepsilon_{h_0}-\mathbb{E}\big[\epsilon_{h_\Omega}\big])^2}{9{\underset{i \in \mathbb{N}_{+}} \sup\left\{\sigma_i^2\right\}}},\\
    D_{r_g,x}&=\sqrt{2\lambda_{\max} ( M_x^\top M_x )\varepsilon_g^{k+1}}\|x_{_\Omega}^{k+1}-x^\star\|\Big]\leq \sqrt{2\lambda_{\max} ( M_x^\top M_x )\varepsilon_{g_0}}\|x^{0}-x^\star\|\Big]\\
    D_{r_h,z}&=\sqrt{2\varepsilon_h^{k+1}}\|z_{_\Omega}^{k+1}-z^\star\|\leq \sqrt{2\varepsilon_{h_0}}\|z^{0}-z^\star\|
\end{align}
with probability (approximately) at least $p^k\left(1-4\exp(-\gamma/2)\right)$, where $(x^\star,z^\star)$ is any solution of~\eqref{Eq:Problem} and $\mathbb{E}[.]$ stands for the expectation operator.
\end{theorem}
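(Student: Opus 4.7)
The plan is to mirror the proof of Theorem~\ref{Theorem3.3}, but apply it to the AxWLM-ADMM convergence bound established in \cite{hamadouche2022probabilistic}, rather than to the AxPGD bound. So the starting point is the deterministic ergodic inequality
\begin{align*}
    &\frac{1}{k+1}\sum_{i=0}^{k}f(x^{i+1},z^{i+1})-f(x^\star,z^\star)
    +\frac{1}{k+1}\sum_{i=0}^{k}\langle\tfrac{1}{\lambda}L u^{i+1},v^{k+1}-v^k\rangle\\
    &\qquad \leq \frac{1}{2(k+1)}\left[\|x^0-x^\star\|^2_{M_x}+\|z^0-z^\star\|^2\right]
    +\frac{1}{k+1}\left[S_{\epsilon_{g_\Omega}}+S_{\epsilon_{h_\Omega}}+D_{r_g,x}+D_{r_h,z}\right],
\end{align*}
where, exactly as in the proof of Theorem~\ref{Theorem3.6}, we take $S_{\epsilon_{g_\Omega}}=\sum_{i=0}^{k}\epsilon_{g_\Omega}^{i+1}$ and $S_{\epsilon_{h_\Omega}}=\sum_{i=0}^{k}\epsilon_{h_\Omega}^{i+1}$, while $D_{r_g,x}$ and $D_{r_h,z}$ are deterministic residual-type terms already controlled in Theorem~\ref{Theorem3.6} by the proximal error upper bounds $\varepsilon_{g_0}$, $\varepsilon_{h_0}$ and by the monotonicity assumption $\|x_{_\Omega}^k-x^\star\|\leq \|x^0-x^\star\|$, $\|z_{_\Omega}^k-z^\star\|\leq \|z^0-z^\star\|$ (which is where one of the factors of $p$ per iteration comes in, giving the $p^k$ prefactor).

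Next I would apply the Bernstein inequality \eqref{Lemma:BernsteinBound} to each of $S_{\epsilon_{g_\Omega}}$ and $S_{\epsilon_{h_\Omega}}$ separately, using Model~\ref{ErrModel4} to provide the zero-mean assumption (for the centered parts), the almost-sure upper bounds $M=\varepsilon_{g_0}-\mathbb{E}[\epsilon_{g_\Omega}]$ (resp.\ $\varepsilon_{h_0}-\mathbb{E}[\epsilon_{h_\Omega}]$), and the variance-to-range ratios $\mu_{\cdot,i}=\sigma_{\cdot,i}^2/M_\cdot$. Substituting these into Bernstein yields the usual
\begin{equation*}
    \Pr\bigl(|S-\mathbb{E}[S]|\geq t\bigr) \leq 2\exp\!\left(\frac{-t^2}{2M(\mu k + \tfrac{1}{3}t)}\right).
\end{equation*}
The key move, which differentiates this theorem from Theorem~\ref{Theorem3.6}, is to restrict attention to the non-asymptotic regime $k \ll t/(3\mu)$, in which the linear term $\tfrac{1}{3}t$ in the denominator dominates and the bound collapses to $2\exp(-3t/(2M))$. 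Choosing $t=\gamma M/3$ then gives $\Pr(|S-\mathbb{E}[S]|\geq \gamma M/3)\lessapprox 2\exp(-\gamma/2)$, exactly as in Theorem~\ref{Theorem3.3}.

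Instantiating this separately with $M=\varepsilon_{g_0}-\mathbb{E}[\epsilon_{g_\Omega}]$ and $M=\varepsilon_{h_0}-\mathbb{E}[\epsilon_{h_\Omega}]$ delivers the two deviation terms $\gamma(\varepsilon_{g_0}-\mathbb{E}[\epsilon_{g_\Omega}])/3$ and $\gamma(\varepsilon_{h_0}-\mathbb{E}[\epsilon_{h_\Omega}])/3$ claimed in the statement, valid in the corresponding non-asymptotic ranges of $k$. Finally, a union bound over the two stochastic sums and over the $k$ monotonicity events (each with probability $p$) delivers the overall success probability $p^k(1-4\exp(-\gamma/2))$, where the factor $4$ accounts for the two-sided deviations of the two independent sums; combining with the deterministic bounds for $D_{r_g,x}$ and $D_{r_h,z}$ inherited from Theorem~\ref{Theorem3.6} then yields the theorem.

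I do not anticipate a serious obstacle, because the deterministic functional-value inequality has already been established in \cite{hamadouche2022probabilistic} and the Bernstein calculation itself is identical in form to the one carried out in Theorem~\ref{Theorem3.3}; the only care required is bookkeeping the two independent error sums (so that $\gamma$ and $M$ are used twice with the right parameters) and being explicit about the non-asymptotic range conditions so that the linear term in the Bernstein denominator is indeed the dominant one.
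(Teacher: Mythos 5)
Your proposal follows essentially the same route as the paper's own proof: start from the deterministic ergodic bound of the cited prior work with $S_{\epsilon_{g_\Omega}}=\sum_{i=0}^{k}\epsilon_{g_\Omega}^{i+1}$ and $S_{\epsilon_{h_\Omega}}=\sum_{i=0}^{k}\epsilon_{h_\Omega}^{i+1}$, apply Bernstein's inequality to each sum, work in the regime $k \ll t/(3\mu)$ where the linear term dominates so the tail collapses to $2\exp(-3t/(2M))$, and set $t=\gamma M/3$. You in fact make explicit some bookkeeping the paper leaves implicit (the union bound giving the factor $4$ and the origin of the $p^k$ prefactor), but the argument is the same.
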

\begin{proof}
From our previous work~\cite{hamadouche2022probabilistic}, we have the following convergence error bound
\begin{align}
    &\frac{1}{k+1}\sum_{i=0}^{k}f(x_{_\Omega}^{i+1},z_{_\Omega}^{i+1})-f(x^\star,z^\star)
    +\frac{1}{k+1}\sum_{i=0}^{k}\langle\frac{1}{\lambda}L  u_{_\Omega}^{i+1},v_{_\Omega}^{k+1}-v_{_\Omega}^k\rangle\leq\notag\\
    &\frac{1}{2(k+1)}\left[\|x^0-x^\star\|^2_{ M_x }+\|z^0-z^\star\|^2\right]
    +\frac{1}{k+1}\left[S_{\epsilon_{g_\Omega}}+S_{\epsilon_{h_\Omega}}+D_{r_g,x}+D_{r_h,z}\right],
\end{align}
Defining $\mu_{\cdot,i} = \sigma_{\cdot,i}^2/M_\cdot \leq \mu_\cdot = {\underset{i \in \mathbb{N}_{+}} \sup\left\{\sigma_{\cdot,i}^2\right\}}/M_\cdot$, where $\sigma_{\cdot,i}^2$ and $M_\cdot$ are the variance and upper bound for each error term. By substituting in the Bernstein's concentration inequality~\eqref{Lemma:BernsteinBound} we obtain 
\begin{align} 
    \text{Pr}\bigg(|S - \mathbb{E}\big[S\big]|&\geq t\bigg)\leq 2\textup{exp}\bigg(\frac{-t^2}{2M\sum_{i=1}^k \mu_i + \frac{2}{3}Mt}\bigg),\\
    \text{Pr}\bigg(|S - \mathbb{E}\big[S\big]|&\geq t\bigg)\leq 2\textup{exp}\bigg(\frac{-t^2}{2M(\mu k + \frac{1}{3}t)}\bigg).
\end{align}
When $k \ll t/3\mu$, we have
\begin{align} 
    \text{Pr}\bigg(|S - \mathbb{E}\big[S\big]|&\geq t\bigg)\lessapprox 2\textup{exp}\bigg(-\frac{3t}{{2}M}\bigg).
\end{align}
Setting $t=\gamma M/3$, then for $k \ll \gamma M/9\mu$, we have
\begin{align} 
    \text{Pr}\bigg(|S - \mathbb{E}\big[S\big]|&\geq \gamma M/3\bigg)\lessapprox 2\textup{exp}\bigg(-\frac{\gamma}{2}\bigg).
\end{align}
substituting $S_{\epsilon_{g_\Omega}} = \sum_{i=0}^{k}\epsilon_{g_\Omega}^{i+1}$ and $S_{\epsilon_{h_\Omega}} = \sum_{i=0}^{k}\epsilon_{h_\Omega}^{i+1}$ with their corresponding parameters completes the proof.
\end{proof}
The bounds of Theorem~\ref{Theorem3.3}, Theorem~\ref{Theorem3.5} and Theorem~\ref{Theorem3.7} ($O(1/k)$) are sharper than the bounds of~\cite{hamadouche2022sharper} and~\cite{hamadouche2022probabilistic} ($O(1/\sqrt{k})$), respectively. Moreover, since we have $\sigma_\cdot^2 \ll {\varepsilon_{\cdot_0}}$ for errors with extreme events, Theorem~\ref{Theorem3.2}, Theorem~\ref{Theorem3.4} and Theorem~\ref{Theorem3.6} are asymptotically much sharper than the best probabilistic bounds so far (see \cite{hamadouche2021sspd, hamadouche2022probabilistic}) since the slow error terms (i.e., $O(1/\sqrt{k})$) only depend on the standard deviation $\sigma_.$ rather than the maximum range of errors $\varepsilon_{0_.}$ as in~\eqref{Eq:2.3} and \eqref{Eq:2.4}. 

\section{EXPERIMENTAL RESULTS}
\label{Section: Experimental Results}
We validate the proposed approach by applying Theorems~\ref{Theorem3.2} and \ref{Theorem3.3} to a spacecraft MPC problem~\cite{hegrenaes2005spacecraft} and compare the sharpness of the combined bound (referred to as "Thrm\_6\&7" below) against Theorem 2 of~\cite{hamadouche2022sharper} for one iteration of MPC. We allow $5000$ iterations for the AxPGD solver to emphasize the asymptotic behaviour. 
The minimum total variation or minimum oscillatory\footnote{Also, minimum bandwidth.} model predictive control (MPC) problem is formulated in terms of the differential control ($\Delta u$) with the following augmented state space description\footnote{Seven states are considered here:  Roll, Pitch, Yaw, $\omega_1$, $\omega_2$, $\omega_3$, $\omega_w$, where Roll, Pitch, Yaw describe the rotating angles of the body frame relative to the orbit  frame, and $\omega_1$, $\omega_2$, $\omega_3$ are the corresponding angular velocities. $\omega_w$ is the angular velocity along the spin axis. The thrusters are controlled by three input voltages, $\tau_1$, $\tau_2$, $\tau_3$, and the reaction wheel is controlled by input voltage $\tau_w$ accordingly.}
\begin{equation}
\begin{split}
    \begin{bmatrix}\Delta x[k+1] \\ y(k+1)\end{bmatrix} &= \begin{bmatrix} A_d & O_m\\C A_d & I_m \end{bmatrix} \begin{bmatrix}\Delta x[k] \\ y(k)\end{bmatrix}+\begin{bmatrix} B_d \\ C B_d \end{bmatrix} \Delta u[k]\\
    y[k] &= \begin{bmatrix} O_n & I_m \end{bmatrix} \begin{bmatrix}\Delta x[k] \\ y(k)\end{bmatrix}\\
\end{split}
\label{augmented}
\end{equation}
where
\begin{equation}
    (A_d, B_d,C,D) = \left(e^{Ah}, \int_0^h e^{At}B dt,C,D\right),
\end{equation}
is the discretized state space description that is obtained from zero-order-hold (ZOH) sampling of the continuous state space description $(A, B, C, D)$ of~\cite{hegrenaes2005spacecraft}. Define the new state space description of the augmented system~\eqref{augmented} as
\begin{equation}
    X[k] = \begin{bmatrix}\Delta x[k] \\ y(k)\end{bmatrix},\quad
    A_a = \begin{bmatrix} A_d & O_m\\C A_d & I_m \end{bmatrix},\quad 
    B_a = \begin{bmatrix} B_d \\ C B_d \end{bmatrix}.
\end{equation}
Then MPC is the solution to the following composite optimization problem
\begin{equation}
    \begin{split}
            \min_{\Delta U \in\mathbb{R}^{p\times N_c}} F(\Delta U):=g(\Delta U)+h(\Delta U),
    \end{split}
    \label{Eq:MPCLASSO}
\end{equation}
where $g$ and $h$ are given by
\begin{equation}
    \begin{split}
            g(\Delta U) &:= \norm{\big(\Phi^\top Q\Phi 
            + R\big)^{\frac{1}{2}}\Delta U -\big(\Phi^\top Q\Phi + R\big)^{-\frac{1}{2}}\Phi^\top Q\big(R_s - \Psi x(k)\big)}_2^2;\\
            h(\Delta U) &:= \lambda \norm{\Delta U}_1.
            \label{(g(U),h(U))}
    \end{split}
\end{equation}
$\Phi_a$, $Y$ and $F_a$ are defined as
\begin{align}
    Y &= \begin{bmatrix}y[k+1|k]\\\vdots\\y[k+N_p|k]\end{bmatrix},\quad F_a = \begin{bmatrix}CA_a\\\vdots\\CA_a^{N_p}\end{bmatrix}, \\
    \Phi_a &= \begin{bmatrix}
    CB_a& \dots& 0\\
    CA_aB_a& CB_a& \dots& 0\\
    CA_a^2B_a& CA_aB_a& \dots& 0\\
    \vdots& \ddots& \ddots& \vdots\\
    CA_a^{N_p-1}B_a& CA_a^{N_p-2}B_a& \dots& CA_a^{N_p-N_c}B_a\end{bmatrix},
\end{align}
where $N_p$ and $N_c$ are the prediction and control horizons, respectively. 

The following plot is obtained from solving the MPC problem~\eqref{Eq:MPCLASSO} using the AxPGD solver~\eqref{AxPGD} with injected truncated random errors that satisfy the following inequality
\begin{equation}
     28\cdot\sigma_{\cdot}^2 \leq 2M_{\epsilon_{\cdot}},
\end{equation}
where $2M_{\epsilon_{\cdot}}$ is the numerical range of error $\epsilon_{\cdot}$ and $\sigma_{\cdot}$ is the corresponding standard deviation. $\epsilon_{\cdot}$ stands for either $\epsilon_{g}$ or $\epsilon_{h}$.

\begin{figure}[h]
\centering
\captionsetup{justification=centering}
\includegraphics[width=10cm]{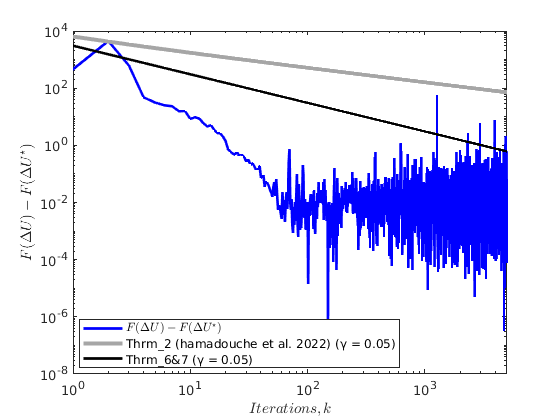}
\caption{Upper bounds based on Theorems \ref{Theorem3.2} and \ref{Theorem3.3} vs Theorem 2~\eqref{Eq:2.3} of Hamadouche et al. 2022~\cite{hamadouche2022sharper} (with $\gamma=0.05$; $\delta = 2.2\times 10^{1}; \epsilon_0 = 10^{1}$).}
\label{fig:fig1}

\end{figure}
From~\autoref{fig:fig1}, we can clearly see that the proposed bound is sharper than the bound of Theorem 2~\cite{hamadouche2022sharper}. Consequently, the new bound of Theorems \ref{Theorem3.2} and \ref{Theorem3.3} explains the asymptotic as well as the non-asymptotic behaviours of the approximate proximal-gradient descnet algorithm (AxPGD)~\eqref{AxPGD} better than any previously derived bounds.

\section{CONCLUSIONS}
\label{Section: Conclusions}
In this work, we improved our previous results~\cite{hamadouche2021sspd, hamadouche2022sharper} and established sharper convergence bounds on the objective function values of approximate proximal-gradient descent (AxPGD), approximate accelerated proximal-gradient descent (AxAPGD) schemes as well as a new approximate generalized proximal ADMM (AxWLM-ADMM) scheme. We exploited second-order moments of approximation errors that manifest rare extreme events and we derived probabilistic asymptotic and non-asymptotic bounds based on Bernstein's concentration inequality which yielded bounds that are explicit functions of the range and variance of approximation errors. As a future work, we will instantiate new algorithms from the AxWLM-ADMM class and compare their performance vs efficiency trade-off using new metrics.
\addcontentsline{toc}{section}{Acknowledgment}
\section*{Acknowledgements}
This work was supported by the Engineering and Physical Research Council (EPSRC) grants (EP/T026111/1 and EP/S000631/1), and the MOD University Defence Research Collaboration (UDRC).
\bibliographystyle{siamplain}
\bibliography{references.bib}
\pdfoutput=1
\end{document}


\maketitle

\section{A detailed example}

Here we include some equations and theorem-like environments to show
how these are labeled in a supplement and can be referenced from the
main text.
Consider the following equation:
\begin{equation}
  \label{eq:suppa}
  a^2 + b^2 = c^2.
\end{equation}
You can also reference equations such as \cref{eq:matrices,eq:bb} 
from the main article in this supplement.

\lipsum[100-101]

\begin{theorem}
  An example theorem.
\end{theorem}

\lipsum[102]
 
\begin{lemma}
  An example lemma.
\end{lemma}

\lipsum[103-105]

Here is an example citation: \cite{KoMa14}.

\section[Proof of Thm]{Proof of \cref{thm:bigthm}}
\label{sec:proof}
\lipsum[106-112]

\section{Additional experimental results}
\Cref{tab:foo} shows additional
supporting evidence. 

\begin{table}[htbp]
{\footnotesize
  \caption{Example table}  \label{tab:foo}
\begin{center}
  \begin{tabular}{|c|c|c|} \hline
   Species & \bf Mean & \bf Std.~Dev. \\ \hline
    1 & 3.4 & 1.2 \\
    2 & 5.4 & 0.6 \\ \hline
  \end{tabular}
\end{center}
}
\end{table}

\bibliographystyle{siamplain}
\bibliography{references}